\let\C\relax
\author{
  Rasmus Kyng\thanks{
      Supported by ONR grant
      N00014-18-1-2562.} \\ 
  \texttt{kyng@inf.ethz.ch}\\
  ETH Zurich
  \and
  Kyle Luh\thanks{Supported in part by NSF postdoctoral fellowship DMS-1702533.} \\
  \texttt{kyle.luh@colorado.edu}\\
  University of Colorado, Boulder
  \and
  Zhao Song\\
  \texttt{zhaos@princeton.edu}\\
  Princeton University 
}
\date{\today}
\title{Four Deviations Suffice for Rank 1 Matrices}
\newtheorem{theorem}{Theorem}[section]
\newtheorem{lemma}[theorem]{Lemma}
\newtheorem{definition}[theorem]{Definition}
\newtheorem{proposition}[theorem]{Proposition}
\newtheorem{corollary}[theorem]{Corollary}
\newtheorem{fact}[theorem]{Fact}
\newtheorem{remark}[theorem]{Remark}
\newcommand{\eps}{\varepsilon}
\newcommand{\R}{\mathbb{R}}
\renewcommand{\P}{\mathbb{P}}
\renewcommand{\Im}{\operatorname{Im}}
\newcommand{\T}{*}
\newcommand{\Ab}{\mathbf{Ab}}
\DeclareMathOperator*{\E}{{\mathbb{E}}}
\DeclareMathOperator*{\Var}{{\bf {Var}}}
\DeclareMathOperator*{\C}{\mathbb{C}}
\DeclareMathOperator{\Tr}{Tr}
\newcommand{\setof}[1]{\left\{#1  \right\}}
\newcommand{\abs}[1]{\lvert #1  \rvert}
\newcommand{\todolow}[1]{\textbf{\color{yellow}[TODO: #1]}}
\renewcommand{\todolow}[1]{}
\newcommand*{\RN}[1]{\expandafter\@slowromancap\romantaumeral #1@}
\newcommand{\define}[4][ignore]{%
  \ifstrequal{#1}{ignore}{}{
  \@namedef{thmtitle@#2}{#1}}%
  \@namedef{thm@#2}{#4}%
  \@namedef{thmtypen@#2}{lemma}%
  \newtheorem{thmtype@#2}[theorem]{#3}%
  \newtheorem*{thmtypealt@#2}{#3~\ref{#2}}%
}
\newcommand{\state}[1]{%
  \@namedef{curthm}{#1}
  \@ifundefined{thmtitle@#1}{
  \begin{thmtype@#1}
    }{
  \begin{thmtype@#1}[\@nameuse{thmtitle@#1}]
  }
    \label{#1}
    \@nameuse{thm@#1}
  \end{thmtype@#1}
  \@ifundefined{thmdone@#1}{
  \@namedef{thmdone@#1}{stated}%
  }{}
}
\newcommand{\restate}[1]{%
  \@namedef{curthm}{#1}
  \@ifundefined{thmtitle@#1}{
    \begin{thmtypealt@#1}
    }{
  \begin{thmtypealt@#1}[\@nameuse{thmtitle@#1}]
  }
    \@nameuse{thm@#1}
  \end{thmtypealt@#1}
  \@ifundefined{thmdone@#1}{
  \@namedef{thmdone@#1}{stated}%
  }{}
}
\newcommand{\thmlabel}[1]{
  \@ifundefined{thmdone@\@nameuse{curthm}}{\label{#1}
    }{\tag*{\eqref{#1}}}
}
\newcommand\blfootnote[1]{%
	\begingroup
	\renewcommand\thefootnote{}\footnote{#1}%
	\addtocounter{footnote}{-1}%
	\endgroup
}
\begin{document}

  \maketitle
  \begin{abstract}We prove a matrix discrepancy bound that strengthens
    the famous Kadison-Singer result of Marcus, Spielman, and Srivastava.
Consider any independent scalar random variables $\xi_1, \ldots, \xi_n$ with finite support, e.g.
 $\{ \pm 1 \}$ or  $\{ 0,1 \}$-valued random variables, or some combination thereof.
Let $\mathbf{u}_1, \dots, \mathbf{u}_n \in \C^m$ and
$$
\sigma^2 = \left\| \sum_{i=1}^n \Var[ \xi_i ] (\mathbf{u}_i \mathbf{u}_i^{*})^2 \right\|.
$$
Then there exists a choice of outcomes  $\eps_1,\ldots,\eps_n$
in the support of $\xi_1,
\ldots, \xi_n$
s.t. 
$$
\left \|\sum_{i=1}^n \E [ \xi_i] \mathbf{u}_i \mathbf{u}_i^*  - \sum_{i=1}^n \eps_i \mathbf{u}_i \mathbf{u}_i^* \right \| \leq 4 \sigma.
$$ 
A simple consequence of our result is an improvement of a Lyapunov-type theorem of Akemann and Weaver.

  \end{abstract}
  \thispagestyle{empty}




\section{Introduction}
\blfootnote{{\it 2010 Mathematics Subject Classification}.
	Primary: 05A99, 11K38 ; Secondary: 46L05.}
\blfootnote{{\it Key words and phrases}. Matrix discrepancy, Interlacing polynomials, Lyapunov Theorem, Operator algebra.}	
Discrepancy theory lies at the heart of numerous problems in mathematics and computer science \cite{Chazelle2000}.  Although closely tied to probability theory, direct randomized approaches rarely yield the best bounds.  In a classical formulation in discrepancy theory, we have $n$ sets on $n$ elements, and would like to two-color
the elements so that each set has roughly the same number of elements
of each color.  Using a simple random coloring, it is an easy
consequence of Chernoff's bound that there exists a coloring such that
the discrepancy in all $n$ sets is $O(\sqrt{n \log n})$
\cite{AlonSpencer}.  However, in a celebrated result, Spencer showed
that in fact there is a coloring with discrepancy at most $6 \sqrt{n}$
\cite{Spencer85}. 

Recently, there has been significant success in generalizing
Chernoff/Hoeffding/Bernstein/Bennett-type concentration bounds for
scalar random variables to matrix-valued random variables
\cite{Rudelson99, aw02, Tropp12}.    
Consider the following matrix concentration bound, which is a direct
consequence of a matrix Hoeffding bound.
\begin{theorem}[\cite{Tropp12}]\label{thm:absTropp}
Let $\xi_i \in \{\pm 1\}$ be independent, symmetric random signs and
$A_1, \dots, A_n \in \C^{m\times m}$ be positive semi-definite matrices.
Suppose $\max_i \| A_i \| \leq \epsilon$ and $\left\| \sum_{i=1}^n A_i
\right\| \leq 1$.
  Then,
$$
\P \left( \left  \|\sum_{i=1}^n \xi_i A_i \right \| \geq t \sqrt{\epsilon} \right) \leq 2m \exp(-t^2/2).
$$
\end{theorem}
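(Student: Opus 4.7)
The plan is to attack this via the matrix Laplace transform method (the standard matrix Chernoff / Ahlswede--Winter / Tropp framework). Set $S := \sum_{i=1}^n \xi_i A_i$, a Hermitian random matrix. Since $-S$ has the same distribution as $S$ (each $\xi_i$ is a symmetric sign), a union bound reduces the claim to showing $\P(\lambda_{\max}(S) \geq t\sqrt{\epsilon}) \leq m \exp(-t^2/2)$. For any $\theta > 0$, applying Markov's inequality to $\exp(\theta \lambda_{\max}(S)) = \lambda_{\max}(\exp(\theta S))$, combined with the inequality $\lambda_{\max}(X) \leq \tr X$ valid for $X \succeq 0$, yields
\[
\P(\lambda_{\max}(S) \geq t\sqrt{\epsilon}) \leq e^{-\theta t\sqrt{\epsilon}}\, \E \tr \exp(\theta S).
\]

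The crux is to control the matrix MGF $\E \tr \exp(\theta S)$ of a sum of independent Hermitian summands. I would invoke Lieb's concavity theorem, packaged as Tropp's master inequality, to get
\[
\E \tr \exp\bigl(\theta {\textstyle \sum_i} \xi_i A_i\bigr) \leq \tr \exp\bigl({\textstyle \sum_i} \log \E \exp(\theta \xi_i A_i)\bigr).
\]
Because $\xi_i$ is a symmetric sign, each summand MGF satisfies $\E \exp(\theta \xi_i A_i) = \cosh(\theta A_i) \preceq \exp(\theta^2 A_i^2 / 2)$, where the semidefinite bound follows by applying the scalar inequality $\cosh(x) \leq e^{x^2/2}$ through the functional calculus. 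Operator monotonicity of $\log$ on the positive cone then gives $\sum_i \log \E \exp(\theta \xi_i A_i) \preceq \tfrac{\theta^2}{2} \sum_i A_i^2$, and Weyl monotonicity of $\tr \exp$ produces
\[
\E \tr \exp(\theta S) \leq \tr \exp\bigl(\tfrac{\theta^2}{2} {\textstyle \sum_i} A_i^2\bigr) \leq m \exp\bigl(\tfrac{\theta^2}{2}\,\|{\textstyle \sum_i} A_i^2\|\bigr).
\]

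To bound the variance proxy, I use that each $A_i \succeq 0$ with $\|A_i\| \leq \epsilon$, which implies $A_i^2 \preceq \epsilon A_i$, so $\sum_i A_i^2 \preceq \epsilon \sum_i A_i$ and hence $\|\sum_i A_i^2\| \leq \epsilon \|\sum_i A_i\| \leq \epsilon$ by the second hypothesis. Combining everything yields $\P(\|S\| \geq t\sqrt{\epsilon}) \leq 2m \exp(-\theta t\sqrt{\epsilon} + \theta^2 \epsilon / 2)$, and optimizing at $\theta = t/\sqrt{\epsilon}$ produces the stated $2m \exp(-t^2/2)$. The sole nontrivial ingredient is Lieb's concavity theorem underlying the MGF subadditivity in step two; everything else is routine scalar Chernoff bookkeeping together with the two elementary operator inequalities $\cosh(\theta A) \preceq \exp(\theta^2 A^2/2)$ for Hermitian $A$ and $A^2 \preceq \|A\| A$ for PSD $A$.
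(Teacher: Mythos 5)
Your proof is correct: the symmetrization/union-bound reduction, the Laplace-transform bound, Lieb-based subadditivity of the matrix cumulant generating function, the bounds $\cosh(\theta A_i)\preceq e^{\theta^2 A_i^2/2}$ and $A_i^2\preceq \epsilon A_i$ (giving $\|\sum_i A_i^2\|\leq\epsilon$), and the optimization $\theta=t/\sqrt{\epsilon}$ all go through and yield exactly $2m\exp(-t^2/2)$. The paper itself gives no proof, quoting the result from Tropp as a direct consequence of a matrix Hoeffding bound, and your argument is precisely that standard derivation, so there is nothing to add.
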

A consequence of this theorem is that with high probability

\begin{align}
\label{eq:highprobmatnormconc}
\left \| \sum_{i=1}^n \xi_i A_i \right \| = O(\sqrt{\log m})
  \sqrt{\epsilon}.
\end{align}
The main theorem of \cite{MSS15b} (from which they deduce the Kadison-Singer Theorem) is essentially equivalent
to the following statement (which can readily be derived from the
bipartition statement in \cite{MSS15b}).

\begin{theorem}[\cite{MSS15b}]\label{thm:KS}
Let $\mathbf{u}_1, \dots, \mathbf{u}_n \in \C^m$ and
suppose $\max_{i \in [n]} \| \mathbf{u}_i \mathbf{u}_i^*\| \leq \epsilon$ and $ \sum_{i=1}^n  u_i u_i^*
  = I$.
  Then, there exists signs $\xi_i \in \{\pm 1\}$ s.t.
$$
 \left \|\sum_{i=1}^n \eps_i \mathbf{u}_i \mathbf{u}_i^*\right \| \leq  O( \sqrt{\epsilon}) .
$$
\end{theorem}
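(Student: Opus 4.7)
The plan is to prove this via the method of interlacing polynomials developed by Marcus, Spielman, and Srivastava. First I would rephrase the sign problem as a bipartition problem. Writing $\eps_i = 2b_i - 1$ with $b_i \in \{0,1\}$, I have $\sum_i \eps_i \mathbf{u}_i \mathbf{u}_i^\T = 2\sum_i b_i \mathbf{u}_i \mathbf{u}_i^\T - I$, so it suffices to find a choice of $b_i$ making $A := \sum_i b_i \mathbf{u}_i \mathbf{u}_i^\T$ satisfy $\|A - I/2\| = O(\sqrt{\eps})$. Equivalently, both extremes of the spectrum of $B := \sum_i b_i(2\mathbf{u}_i \mathbf{u}_i^\T)$ must lie within $O(\sqrt{\eps})$ of $1$; by the symmetry $b_i \mapsto 1 - b_i$, it is enough to handle the upper edge.

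Next I would set up an interlacing family in the style of MSS. For each partial Boolean assignment $\beta_1,\ldots,\beta_k$, define
$$
q_{\beta_1,\ldots,\beta_k}(x) := \E\!\left[ \det\!\left( xI - \sum_i b_i\,(2\mathbf{u}_i \mathbf{u}_i^\T) \right) \,\Big|\, b_1 = \beta_1,\ldots,b_k=\beta_k \right],
$$
with the remaining $b_j$ independent and uniform on $\{0,1\}$. Since each $2\mathbf{u}_i \mathbf{u}_i^\T$ is a rank-one PSD update and conditioning on one coordinate yields a convex combination of two characteristic polynomials differing by a rank-one perturbation, the $q_\beta$ form an interlacing family. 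The standard MSS machinery then exhibits a full assignment $\beta^\star$ whose characteristic polynomial has largest root at most the largest root of the root polynomial $q_\varnothing(x) = \E\det(xI - B)$.

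It then remains to bound the largest root of $q_\varnothing$. Using the standard differential identity
$$
q_\varnothing(x) = \prod_{i=1}^n \left(1 - \tfrac{1}{2}\partial_{z_i}\right) \det\!\left( xI - \sum_i z_i (2\mathbf{u}_i \mathbf{u}_i^\T) \right) \bigg|_{z=0},
$$
the bracketed polynomial is real-stable, and the operators $(1 - \tfrac{1}{2}\partial_{z_i})$ preserve real-stability. At this point I would invoke the multivariate upper-barrier argument: starting at a point $x_0 = 1 + t$ with $t = \Theta(\sqrt{\eps})$ that is an upper barrier for the starting polynomial at $z = 0$, I would track how each $(1 - \tfrac{1}{2}\partial_{z_i})$ forces the barrier location to advance. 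The key MSS inequalities show that, provided $\|2\mathbf{u}_i \mathbf{u}_i^\T\| \le 2\eps$, one can charge the advance per operator against the $\eps$-budget so that the cumulative shift remains $O(\sqrt{\eps})$. This caps the largest root of $q_\varnothing$ at $1 + O(\sqrt{\eps})$, and combined with the symmetric lower-edge analysis yields $\|A - I/2\| = O(\sqrt{\eps})$, hence the theorem.

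The main obstacle is the barrier step. One must design a univariate upper-potential whose position advances by a quantitatively controlled amount under each differential operator, with the total advance absorbed by an initial margin of the correct order $\sqrt{\eps}$. The right choice of barrier shift per operator and the careful use of monotonicity/convexity of $\partial \log p$ for real-rooted $p$ are precisely what force the bound to scale as $\sqrt{\eps}$ rather than $\eps$, and optimizing this trade-off is the technical heart of the proof.
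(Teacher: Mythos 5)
Your skeleton (recast signs as a $\{0,1\}$-assignment, build an interlacing family of conditional expected characteristic polynomials, bound the largest root of $q_{\varnothing}$ by a multivariate barrier argument) is indeed the MSS machinery, but there is a genuine gap at the very first reduction. Your interlacing family is built from the single $m\times m$ matrix $B=\sum_i b_i\,(2\mathbf{u}_i\mathbf{u}_i^*)$, so the machinery produces one assignment $\beta^\star$ with a bound on $\lambda_{\max}(B)$ \emph{only}. That one-sided bound is vacuous for the discrepancy statement: the assignment $b\equiv 0$ already has largest root $0\le 1+O(\sqrt{\eps})$, yet $\|A-I/2\|=1/2$. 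The appeal to the symmetry $b_i\mapsto 1-b_i$ does not repair this, because it relates the lower spectral edge of $B$ at $\beta^\star$ to the upper edge of a \emph{different} assignment (the complement of $\beta^\star$), and nothing in your argument controls the largest root of that complementary assignment — the interlacing lemma selects a single leaf of the tree, not a leaf together with its complement. What is needed is simultaneous control of both edges for the \emph{same} assignment. MSS achieve this by lifting to dimension $2m$: the bipartition is encoded by block-diagonal vectors, so one largest-root bound simultaneously controls $\bigl\|\sum_{i\in S}\mathbf{u}_i\mathbf{u}_i^*\bigr\|$ and $\bigl\|\sum_{i\notin S}\mathbf{u}_i\mathbf{u}_i^*\bigr\|$, and isotropy then converts the complement's upper bound into the lower-edge bound for $S$. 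The present paper uses a different device for the same purpose: it works with
\begin{equation*}
\det\Bigl(x^2 I - M^2\Bigr)=\det\bigl(xI-M\bigr)\det\bigl(xI+M\bigr),\qquad M=\sum_{i}(\xi_i-\mu_i)\,\mathbf{u}_i\mathbf{u}_i^*,
\end{equation*}
whose largest root is exactly $\|M\|$, so a single root bound is automatically two-sided; Theorem~\ref{thm:KS} is then immediate from Corollary~\ref{thm:main} because $\sigma^2=\|\sum_i(\mathbf{u}_i\mathbf{u}_i^*)^2\|\le\eps$ under the stated isotropy and norm hypotheses.

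Two smaller points. First, your differential identity has the wrong sign/normalization: with $b_i$ uniform on $\{0,1\}$ one has $\E\det\bigl(xI-\sum_i b_i\,2\mathbf{u}_i\mathbf{u}_i^*\bigr)=\prod_i(1-\partial_{z_i})\det\bigl(xI+\sum_i z_i\,\mathbf{u}_i\mathbf{u}_i^*\bigr)\big|_{z=0}$; as written, already for $n=1$ your formula yields $x^m+\tfrac12 x^{m-1}\Tr(2\mathbf{u}\mathbf{u}^*)$ instead of $x^m-\tfrac12 x^{m-1}\Tr(2\mathbf{u}\mathbf{u}^*)$. Second, proving the common interlacing requires real-rootedness of \emph{arbitrary} convex combinations of the two children (Lemma~\ref{lem:commoninterlacing}), obtained by rerunning the expected-characteristic-polynomial argument with reweighted probabilities; your sketch gestures at this but it should be said. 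These are repairable; the missing device for simultaneous two-sided control is the substantive gap.
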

Thus, for rank 1 matrices the theorem improves on the norm bound in
Equation~\eqref{eq:highprobmatnormconc}, by a factor $\sqrt{\log m}$,
in a manner analogous to the improvement of Spencer's theorem over the bound based
on the scalar Chernoff bound.

For random signings of matrices one can establish bounds in some cases that are much stronger
than Theorem~\ref{thm:absTropp}.
\begin{theorem}[\cite{Tropp12}] \label{thm:Tropp}
Let $\xi_i \in \{\pm 1\}$ be independent random signs, and let
$A_1, \dots, A_n \in \C^{m\times m}$ be Hermitian matrices.
Let $\sigma^2 = \left\| \sum_{i=1}^n \Var[ \xi_i ] A_i ^2 \right\|.$
 Then, 
$$
\P \left( \left  \|\sum_{i=1}^n \E [ \xi_i] A_i - \sum_{i=1}^n \xi_i A_i \right \| \geq t \cdot \sigma \right) \leq 2m \exp(-t^2/2).
$$
\end{theorem}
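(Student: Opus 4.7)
The plan is to follow the matrix Laplace transform methodology pioneered by Ahlswede--Winter and sharpened by Tropp. Set $Y = \sum_{i=1}^n (\xi_i - \E[\xi_i]) A_i$, which is a sum of independent, centered, Hermitian random matrices. Writing $\|Y\| = \max\{\lambda_{\max}(Y), \lambda_{\max}(-Y)\}$ and noting that $-Y$ has the same cumulant structure as $Y$ (replacing $\xi_i$ by $-\xi_i$ leaves all variances invariant), a union bound reduces the task to proving the one-sided estimate $\P(\lambda_{\max}(Y) \geq t\sigma) \leq m e^{-t^2/2}$ and then doubling.

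The standard Chernoff--Markov inequality applied to the matrix exponential yields
\begin{align*}
\P(\lambda_{\max}(Y) \geq t\sigma) \;\leq\; \inf_{\theta > 0} e^{-\theta t \sigma} \, \E \tr \exp(\theta Y).
\end{align*}
To control the trace MGF of an independent sum, I would invoke Lieb's concavity theorem, in the form packaged as Tropp's master tail inequality:
\begin{align*}
\E \tr \exp(\theta Y) \;\leq\; \tr \exp\!\left(\sum_{i=1}^n \log \E \exp\big(\theta (\xi_i - \E \xi_i) A_i\big)\right).
\end{align*}
The per-term input is a matrix Hoeffding-type bound: for a $\{\pm 1\}$-valued random variable $\xi_i$ with variance $v_i$, a direct scalar computation of the MGF of $\xi_i - \E \xi_i$ combined with operator monotonicity of $\log$ (the standard ``transfer principle'') lifts to
\begin{align*}
\log \E \exp\big(\theta (\xi_i - \E \xi_i) A_i\big) \;\preceq\; \tfrac{1}{2} \theta^2 v_i A_i^2.
\end{align*}
Substituting, using monotonicity of the trace exponential on the PSD cone, and then pulling the operator norm out via $\tr \exp(M) \leq m \exp(\|M\|)$ for $m\times m$ Hermitian $M$, we obtain
\begin{align*}
\E \tr \exp(\theta Y) \;\leq\; m \exp\!\left(\tfrac{\theta^2 \sigma^2}{2}\right).
\end{align*}
Optimizing at $\theta = t/\sigma$ produces the one-sided bound $m e^{-t^2/2}$, and symmetry gives the stated factor of $2m$.

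The main technical obstacle is the per-term cumulant bound with the correct variance factor $v_i$ rather than the coarser Hoeffding constant: for general (asymmetric) $\{\pm 1\}$ variables the scalar MGF of the centering is not simply $\cosh$, so one must carry out the shifted Bernoulli calculation carefully before invoking the operator-monotone logarithm. Everything else is a clean assembly of the matrix Laplace transform, Lieb's theorem, and this single scalar-to-matrix transfer.
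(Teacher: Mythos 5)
Your overall skeleton (matrix Laplace transform, Lieb's concavity via Tropp's master bound, $\operatorname{tr}\exp(M)\le m\exp(\|M\|)$, optimize $\theta=t/\sigma$, symmetrize and union bound) is exactly the right machinery, and it is how the Rademacher-series bound cited here from Tropp is proved. The gap is precisely at the step you yourself flag as the ``main technical obstacle'': the per-term bound $\log \E \exp\big(\theta(\xi_i-\E\xi_i)A_i\big)\preceq \tfrac12\theta^2 \Var[\xi_i]A_i^2$ is \emph{false} for asymmetric $\{\pm1\}$ variables, because already the scalar inequality $\E e^{s(\xi-\E\xi)}\le e^{\Var[\xi]s^2/2}$ fails. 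Concretely, take $\P(\xi=1)=p=0.01$: the centered variable takes the value $2(1-p)=1.98$ with probability $0.01$ and $-0.02$ otherwise, so at $s=1$ one has $\E e^{s(\xi-\E\xi)}=0.01\,e^{1.98}+0.99\,e^{-0.02}\approx 1.043$, while $e^{2p(1-p)}\approx 1.020$. The sharp sub-Gaussian proxy for a centered two-point distribution is the Kearns--Saul constant, which strictly exceeds the variance except in the symmetric case; no ``careful shifted Bernoulli calculation'' will produce the variance as the proxy.

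This is not a repairable bookkeeping issue within your plan: with the stated purely sub-Gaussian tail in the true variance, the conclusion itself fails already at $m=n=1$, $A_1=1$ (with $p=0.01$ the deviation $1.98=t\sigma$ corresponds to $t\approx 9.95$, so the claimed bound $2e^{-t^2/2}\approx 10^{-21}$ is exceeded by the actual probability $0.01$). Your argument does go through verbatim in the symmetric case $\E\xi_i=0$, $\Var[\xi_i]=1$, where the scalar MGF is $\cosh(\theta)\le e^{\theta^2/2}$ and one recovers $\P\big(\|\sum_i\xi_iA_i\|\ge t\sigma\big)\le 2m e^{-t^2/2}$ with $\sigma^2=\|\sum_i A_i^2\|$ --- which is the result actually contained in the cited reference and used in this paper only for motivation (the general-variance, biased-sign statement with this tail would need either the Kearns--Saul proxy in place of the variance or a Bernstein-type tail with a range term). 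So either restrict to symmetric signs, or weaken the per-term cumulant bound accordingly and accept the correspondingly weaker tail.
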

From this theorem we deduce that with high probability
\begin{equation}
\label{eq:highprobmatnormconcVAR}
 \left \|\sum_{i=1}^n \E [ \xi_i] A_i - \sum_{i=1}^n \xi_i A_i \right \|  = O(\sqrt{\log m})
\sigma.
\end{equation}
Of course, this implies that there exists a choice of signs $\eps_1, \dots, \eps_n \in \{\pm 1\}$ such that 
$$
\left \|\sum_{i=1}^n \E [ \xi_i] A_i - \sum_{i=1}^n \eps_i A_i \right \|  = O(\sqrt{\log m})
\sigma.
$$
Our main result demonstrates that for rank-1 matrices, there exists a choice of signs with a stronger guarantee.  
\begin{theorem}[Main Theorem] \label{thm:mainexpct}
Consider any independent scalar random variables $\xi_1, \ldots,
\xi_n$ with finite support.
Let $\mathbf{u}_1, \dots, \mathbf{u}_n \in \C^m$ and
$$
\sigma^2 = \left\| \sum_{i=1}^n \Var[ \xi_i ] (u_i u_i^{*})^2 \right\|.
$$

Then there exists a choice of outcomes  $\eps_1,\ldots,\eps_n$
in the support of $\xi_1,
\ldots, \xi_n$
$$
\left \|\sum_{i=1}^n \E [ \xi_i] \mathbf{u}_i \mathbf{u}_i^*  - \sum_{i=1}^n \eps_i \mathbf{u}_i \mathbf{u}_i^* \right \| \leq 4 \sigma.
$$ 
\end{theorem}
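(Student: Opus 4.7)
The plan is to extend the Marcus-Spielman-Srivastava interlacing polynomials framework to capture variance. After centering via $Z_i := \xi_i - \E[\xi_i]$, the task reduces to finding $\eta_i \in \supp(Z_i)$ such that $\|\sum_i \eta_i u_iu_i^*\| \leq 4\sigma$. The immediate obstruction to a direct MSS-style attack is that $\det(xI - \sum_i Z_i u_iu_i^*)$ is multi-affine in $(Z_1,\ldots,Z_n)$, since each rank-$1$ perturbation contributes a factor of degree at most one in its coefficient; its expectation therefore collapses to $\det(xI - \sum_i \E[Z_i] u_iu_i^*) = x^m$, entirely losing the variance information. To get around this, I would work instead with
$$p(x) \;=\; \E_Z\bigl[\det(xI-Y)\det(xI+Y)\bigr] \;=\; \E_Z\det\bigl(x^2 I - Y^2\bigr), \qquad Y := \textstyle\sum_i Z_i u_iu_i^*,$$
which is the expected characteristic polynomial of $\sum_i Z_i B_i$ for the rank-$2$ block matrices $B_i := u_iu_i^* \oplus (-u_iu_i^*)$. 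The polynomial is degree two in each $Z_i$, so it genuinely depends on $v_i = \E[Z_i^2]$; moreover its roots are exactly $\pm\lambda_j(Y)$, so the largest root in absolute value is precisely $\|Y\|$, the quantity we need to control.

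Next I would erect an interlacing family over $p$: for each prefix of outcomes $(\eta_1,\ldots,\eta_k)$, define
$$p_{\eta_1,\ldots,\eta_k}(x) \;=\; \E_{Z_{k+1},\ldots,Z_n}\det(xI - Y_\eta)\det(xI + Y_\eta), \qquad Y_\eta := \textstyle\sum_{i\leq k}\eta_i u_iu_i^* + \sum_{i>k}Z_i u_iu_i^*,$$
and verify that the children of every internal node share a common real-rooted interlacer, so that the standard interlacing descent produces an assignment $(\eta_1,\ldots,\eta_n)$ whose resulting matrix has norm at most the largest root of the root polynomial $p(x)$. I would establish the interlacer property by lifting each $p_{\eta_1,\ldots,\eta_k}$ to a multivariate polynomial in auxiliary variables $z_{k+1},\ldots,z_n$ and invoking Borcea-Br\"and\'en real-stability preservers, adapted to the quadratic-per-variable dependence by introducing a pair of auxiliary variables per $Z_i$ that track the two factors $\det(xI\mp Y_\eta)$ separately before being identified.

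The main obstacle will be the root bound $\lambda_{\max}(p) \leq 4\sigma$. Expanding $Y^2 = \sum_i Z_i^2(u_iu_i^*)^2 + \sum_{i\neq j}Z_iZ_j\,u_iu_i^*u_ju_j^*$ and using $\E[Z_iZ_j] = v_i\delta_{ij}$ identifies the matrix $\sum_i v_i(u_iu_i^*)^2$, of norm exactly $\sigma^2$, as governing the first-order behavior of $p$. I would then run a multivariate barrier-function argument in the spirit of \cite{MSS15b}, maintaining an upper barrier above the largest root of a multivariate lift of $p$ and bounding the shift caused by integrating each coordinate against the law of $Z_i$; the sharp accounting producing the constant $4$—naturally read as two factors of $2$ corresponding to bounding both $\lambda_{\max}(Y)$ and $-\lambda_{\min}(Y)$ separately—is where I expect the bulk of the delicate balancing to lie, and is the step most sensitive to the generality of allowing arbitrary finitely-supported $\xi_i$ rather than just $\{\pm 1\}$-valued signs.
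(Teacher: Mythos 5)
Your plan follows essentially the same route as the paper: center the variables, pass to $\E\bigl[\det(x^2I-Y^2)\bigr]=\E\bigl[\det(xI-Y)\det(xI+Y)\bigr]$ so that the largest root is exactly $\|Y\|$, build an interlacing family over partial assignments of outcomes, realize the expected polynomial as stability-preserving second-order differential operators applied to a determinant, bound the largest root by a barrier argument, and rescale by $\sigma$. The one substantive piece you leave open --- controlling the root shift under the operators $1-\tfrac{1}{2}\partial_{z_i}^2$ --- is exactly what the paper supplies via a barrier-update lemma adapted from Anari--Gharan (their Lemma 4.8), with the choices $\delta_i = t\tau_i \mathbf{v}_i^*\mathbf{v}_i$, $t=2$, $\alpha=2t$ producing the constant $4$ (rather than your heuristic of two factors of $2$ from the two extreme eigenvalues).
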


For rank 1 matrices our theorem improves on the norm bound in
Equation~\eqref{eq:highprobmatnormconcVAR}, by a factor $\sqrt{\log m}$.
Note for example, if  $\xi_i$ is $\setof{\pm 1}$-valued,
then $\Var[ \xi_i ]= 1-\E [ \xi_i]^2 $.

\begin{remark}
	As will be apparent in the proof, given the support of $\xi_i$, one can consider alternative random variables $\xi_i'$ with identical mean and reduced variance such that the support of $\xi_i'$ is a subset of $\xi_i$, to obtain the same conclusion.  In particular, in some applications, one may be interested in restricting the values taken by $\eps_i$.  For example, shifting the probability mass of $\xi_i$ to two points of support, the nearest above the mean and the closest below it, while maintaining the mean can reduce the variance and therefore guarantee that our choices of $\eps_i$ lie on these two points.  
\end{remark}
Specializing to centered random variables, we obtain the following
corollary, which in a simple way generalizes Theorem \ref{thm:KS}, although
with a slightly worse constant.
\begin{corollary} \label{thm:main}
Let $\mathbf{u}_1, \dots, \mathbf{u}_n \in \C^m$ and
$$
\sigma^2 = \left\| \sum_{i=1}^n (\mathbf{u}_i \mathbf{u}_i^{*})^2 \right\|.
$$
There exists a choice of signs $\eps_i \in \setof{\pm 1}$ such that
$$
\left \| \sum_{i=1}^n \eps_i \mathbf{u}_i \mathbf{u}_i^* \right \| \leq 4 \sigma.
$$ 
\end{corollary}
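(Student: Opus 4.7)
The plan is to derive Corollary \ref{thm:main} as an immediate specialization of the Main Theorem (Theorem \ref{thm:mainexpct}). The strategy is to pick the scalar random variables $\xi_i$ so that three things happen simultaneously: (i) their support lies in $\{\pm 1\}$, so that the outcomes $\eps_i$ produced by Theorem \ref{thm:mainexpct} are signs; (ii) they are centered, so that the expected sum $\sum_i \E[\xi_i]\mathbf{u}_i\mathbf{u}_i^*$ drops out of the bound; and (iii) their variances are all equal to one, so that the quantity $\|\sum_i \Var[\xi_i](\mathbf{u}_i\mathbf{u}_i^*)^2\|$ appearing in Theorem \ref{thm:mainexpct} matches the $\sigma^2$ appearing in the corollary. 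Independent Rademacher variables, i.e.\ uniform random signs on $\{+1,-1\}$, do all three at once.

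Concretely, I would take $\xi_1,\ldots,\xi_n$ to be i.i.d.\ uniform on $\{\pm 1\}$, check that $\E[\xi_i]=0$, $\Var[\xi_i]=1$, and that the finite-support hypothesis of Theorem \ref{thm:mainexpct} is met. With this choice,
$$
\left\|\sum_{i=1}^n \Var[\xi_i](\mathbf{u}_i\mathbf{u}_i^*)^2\right\| = \left\|\sum_{i=1}^n (\mathbf{u}_i\mathbf{u}_i^*)^2\right\| = \sigma^2,
$$
so the $\sigma$ of Theorem \ref{thm:mainexpct} coincides with that of the corollary. Applying Theorem \ref{thm:mainexpct} then yields outcomes $\eps_i$ in the support of $\xi_i$ (hence in $\{\pm 1\}$) satisfying
$$
\left\|\sum_{i=1}^n \E[\xi_i]\mathbf{u}_i\mathbf{u}_i^* - \sum_{i=1}^n \eps_i \mathbf{u}_i\mathbf{u}_i^*\right\| \leq 4\sigma,
$$
and since $\E[\xi_i]=0$ the first sum vanishes, leaving exactly the conclusion of the corollary.

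There is no genuine obstacle in this step; all of the difficulty is packaged inside the Main Theorem itself. The only small points to verify are the two parameter identifications above (mean zero kills the drift, variance one makes the two $\sigma$'s coincide) and the fact that the support of the produced $\eps_i$ matches that of $\xi_i$, which is explicitly part of the statement of Theorem \ref{thm:mainexpct}. Once these are in place, the corollary is a one-line consequence, and the slightly worse constant relative to Theorem \ref{thm:KS} is simply inherited from the constant $4$ in the Main Theorem.
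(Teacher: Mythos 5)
Your proposal is correct and matches the paper's own (implicit) derivation: the corollary is obtained by specializing Theorem~\ref{thm:mainexpct} to centered $\{\pm 1\}$-valued random variables, where mean zero removes the expectation term and variance one makes the two $\sigma$'s coincide. Nothing further is needed.
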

Notice that if $\max_i \| \mathbf{u}_i \mathbf{u}_i^*\| \leq \epsilon$ and $ \sum_{i=1}^n  \mathbf{u}_i \mathbf{u}_i^*
  = I$, then $\sigma^2 \leq \epsilon$, and we obtain
  Theorem~\ref{thm:KS} as consequence of the corollary.

Our Theorem~\ref{thm:mainexpct} has multiple advantages over
Theorem~\ref{thm:KS}.
It allows us to show existence of solutions close to the mean of arbitrarily
biased $\pm 1$ random variables, instead of only zero mean distributions.
If some variable $\xi_i$ is extremely biased, its
variance is correspondingly low, as $\Var[ \xi_i ] = 1-\E [ \xi_i]^2$.

If only a small subset of the rank 1 matrices obtain the $\epsilon$ norm
bound, and the rest are significantly smaller in norm, then we can have $\sigma^2 \approx \epsilon^2$, so
we prove a bound of $O(\epsilon)$ instead of the $O(\sqrt{\epsilon})$
bound of the Kadison-Singer theorem.
On the other hand, when the problem is appropriately scaled, we always have $\sigma \geq \epsilon^2$, so the gap
between the two results can never be more than a square root.  Additionally, note that although the Kadison-Singer theorem requires
$ \sum_{i=1}^n  \mathbf{u}_i \mathbf{u}_i^*
  = I$, a multi-paving argument\footnote{This was pointed out to
    us by Tarun Kathuria.} can be used to instead 
  relax this to $\| \sum_{i=1}^n  \mathbf{u}_i \mathbf{u}_i^* \| \leq 1$ \cite[Theorem 2 (b)]{Weaver04}.

\paragraph{Approximate Lyapunov Theorems.}
Marcus, Spielman and Srivastava resolved the Kadison-Singer problem by proving Weaver's conjecture \cite{Weaver04}, which was shown to imply the Kadison-Singer conjecture.  In \cite{Akemann14}, Akemann and Weaver prove a generalization of Weaver's conjecture \cite{Weaver04}.
\begin{theorem}[\cite{Akemann14}] \label{thm:lyapunovthm}
Let $\mathbf{u}_1, \dots, \mathbf{u}_n \in \C^m$ such that $\|\sum_{i=1}^n \mathbf{u}_i \mathbf{u}_i^* \| \leq 1$ and $\max_i \| \mathbf{u}_i \mathbf{u}_i^*\| \leq \epsilon$.  For any $t_i \in [0,1]$ and $1 \leq i \leq n$, there exists a set of indices $S \subset \{1, 2, \dots, n\}$ such that
$$
\left \| \sum_{i \in S} \mathbf{u}_i \mathbf{u}_i^* - \sum_{i=1}^n t_i \mathbf{u}_i \mathbf{u}_i^* \right \| = O(\epsilon^{1/8}).
$$ 
\end{theorem}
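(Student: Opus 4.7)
The plan is to derive Theorem~\ref{thm:lyapunovthm} as a direct application of Theorem~\ref{thm:mainexpct} with Bernoulli inputs, using the outcomes $\eps_i \in \{0,1\}$ as the indicator of membership in $S$. Specifically, let $\xi_i$ be the Bernoulli random variable with $\P[\xi_i=1] = t_i$ and $\P[\xi_i=0] = 1-t_i$. Then $\E[\xi_i] = t_i$, $\Var[\xi_i] = t_i(1-t_i) \leq 1/4$, and the support is $\{0,1\}$, which is exactly the kind of finitely supported distribution Theorem~\ref{thm:mainexpct} accepts.

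Applying Theorem~\ref{thm:mainexpct} produces outcomes $\eps_i \in \{0,1\}$ such that
$$
\left\|\sum_{i=1}^n t_i \mathbf{u}_i \mathbf{u}_i^* - \sum_{i=1}^n \eps_i \mathbf{u}_i \mathbf{u}_i^* \right\| \leq 4\sigma,
\qquad \sigma^2 = \left\|\sum_{i=1}^n t_i(1-t_i)(\mathbf{u}_i \mathbf{u}_i^*)^2\right\|.
$$
Taking $S = \{i : \eps_i = 1\}$ identifies $\sum_i \eps_i \mathbf{u}_i \mathbf{u}_i^*$ with $\sum_{i\in S} \mathbf{u}_i \mathbf{u}_i^*$, so it remains only to bound $\sigma$ by a function of $\epsilon$.

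Here is where the rank-1 structure pays off: since $(\mathbf{u}_i \mathbf{u}_i^*)^2 = \|\mathbf{u}_i\|^2 \mathbf{u}_i \mathbf{u}_i^*$ and $\|\mathbf{u}_i\|^2 \leq \epsilon$, the scalar weights satisfy $t_i(1-t_i)\|\mathbf{u}_i\|^2 \leq \epsilon$, so in the PSD order
$$
\sum_{i=1}^n t_i(1-t_i)(\mathbf{u}_i \mathbf{u}_i^*)^2 \ = \ \sum_{i=1}^n t_i(1-t_i)\|\mathbf{u}_i\|^2 \mathbf{u}_i \mathbf{u}_i^* \ \preceq \ \epsilon \sum_{i=1}^n \mathbf{u}_i \mathbf{u}_i^*,
$$
whose operator norm is at most $\epsilon$ by the hypothesis $\|\sum_i \mathbf{u}_i \mathbf{u}_i^*\| \leq 1$. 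Thus $\sigma \leq \sqrt{\epsilon}$, and the left-hand side of the target inequality is at most $4\sqrt{\epsilon}$, which is in fact much stronger than the claimed $O(\epsilon^{1/8})$.

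The main obstacle in this approach is purely the invocation of Theorem~\ref{thm:mainexpct}: once it is available, the derivation is essentially two lines of matrix arithmetic and a PSD comparison. All of the delicate work (the interlacing-polynomial / mixed characteristic polynomial machinery needed to produce a single good outcome $\eps_i$ in the support) has been absorbed into Theorem~\ref{thm:mainexpct} itself; the only variant observation required for this corollary is that one may freely choose the support of $\xi_i$ to be $\{0,1\}$, which is exactly what the remark following the main theorem allows.
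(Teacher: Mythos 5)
Your proposal is correct and mirrors exactly how the paper handles this: the paper does not reprove the cited result of Akemann--Weaver directly, but derives its stronger Corollary~\ref{cor:lyapunov} (bound $2\epsilon^{1/2}$) from Theorem~\ref{thm:mainexpct} via the same $\{0,1\}$-valued Bernoulli specialization with means $t_i$, variance bound $t_i(1-t_i)\le 1/4$, and the PSD comparison $(\mathbf{u}_i\mathbf{u}_i^*)^2=\|\mathbf{u}_i\|^2\,\mathbf{u}_i\mathbf{u}_i^*\preceq\epsilon\,\mathbf{u}_i\mathbf{u}_i^*$, which subsumes the stated $O(\epsilon^{1/8})$ bound. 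Your constant ($4\sqrt{\epsilon}$ instead of $2\sqrt{\epsilon}$) is slightly looser only because you dropped the factor $1/4$ from $t_i(1-t_i)$ when bounding $\sigma^2$, but this is immaterial to the claim.
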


Due to the classical Lyapunov theorem \cite{lyapunov1940completely} and its equivalent versions \cite{Lindenstrauss66}, in their study of operator algebras, Akemann and Anderson \cite{Ademann91} refer to a result as a Lyapunov theorem if the result states that for a convex set $C$, the image of $C$ under an affine map is equal to the image of the extreme points of $C$.  Theorem \ref{thm:lyapunovthm} is an approximate Lyapunov theorem as it can be interpreted as saying that the image of $[0,1]^n$ under the map
$$
f: (t_1, \dots, t_n) \rightarrow \sum_{i=1}^n t_i \mathbf{u}_i \mathbf{u}_i^*, 
$$
can be approximated by the image of one of the vertices of the hypercube $[0,1]^n$.  A corollary of our main result, Theorem \ref{thm:mainexpct}, is the following strengthening of Theorem \ref{thm:lyapunovthm}.  This result greatly improves the $\epsilon$ dependence of the original Lyapunov-type theorem and provides a small explicit constant.
\begin{corollary} \label{cor:lyapunov}
Let $\mathbf{u}_1, \dots, \mathbf{u}_n \in \C^m$ such that $\|\sum_{i=1}^n \mathbf{u}_i \mathbf{u}_i^* \| \leq 1$ and $\max_i \| \mathbf{u}_i \mathbf{u}_i^*\| \leq \epsilon$.  For any $t_i \in [0,1]$ and $1 \leq i \leq n$, there exists a set of indices $S \subset \{1, 2, \dots, n\}$ such that
$$
\left \| \sum_{i \in S} \mathbf{u}_i \mathbf{u}_i^* - \sum_{i=1}^n t_i \mathbf{u}_i \mathbf{u}_i^* \right \| \leq 2 \, \epsilon^{1/2}.
$$ 
\end{corollary}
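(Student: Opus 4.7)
The plan is to derive the corollary as a direct consequence of Theorem~\ref{thm:mainexpct} by choosing each $\xi_i$ to be a Bernoulli random variable on $\{0,1\}$ with mean $t_i$. Then a choice of outcomes $\eps_i \in \{0,1\}$ corresponds exactly to a subset $S = \{i : \eps_i = 1\}$, and the deterministic term $\sum_i \E[\xi_i] \mathbf{u}_i \mathbf{u}_i^* = \sum_i t_i \mathbf{u}_i \mathbf{u}_i^*$ is precisely the quantity we want to approximate. So the only task is to bound $\sigma^2$ in terms of $\epsilon$.

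For this Bernoulli choice, $\Var[\xi_i] = t_i(1-t_i) \leq 1/4$. The key observation is the rank-1 identity $(\mathbf{u}_i \mathbf{u}_i^*)^2 = \|\mathbf{u}_i\|^2 \, \mathbf{u}_i \mathbf{u}_i^*$, combined with the assumption that $\|\mathbf{u}_i\|^2 = \|\mathbf{u}_i \mathbf{u}_i^*\| \leq \epsilon$. This gives the operator inequality
$$
\sum_{i=1}^n \Var[\xi_i] (\mathbf{u}_i \mathbf{u}_i^*)^2 \preceq \frac{\epsilon}{4} \sum_{i=1}^n \mathbf{u}_i \mathbf{u}_i^*,
$$
and taking norms (together with the hypothesis $\|\sum_i \mathbf{u}_i \mathbf{u}_i^*\| \leq 1$) yields $\sigma^2 \leq \epsilon/4$, i.e.\ $\sigma \leq \epsilon^{1/2}/2$.

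Applying Theorem~\ref{thm:mainexpct} with these $\xi_i$ then produces outcomes $\eps_i \in \{0,1\}$ satisfying
$$
\left\|\sum_{i \in S} \mathbf{u}_i \mathbf{u}_i^* - \sum_{i=1}^n t_i \mathbf{u}_i \mathbf{u}_i^*\right\| \leq 4\sigma \leq 2\,\epsilon^{1/2},
$$
which is exactly the claimed bound. There is no genuine obstacle: once Theorem~\ref{thm:mainexpct} is in hand, the only thing to verify is the variance computation, and the rank-1 structure makes this essentially a one-line estimate. The main conceptual point is simply that by allowing non-symmetric distributions (here, biased Bernoullis) rather than restricting to $\pm 1$ signs, Theorem~\ref{thm:mainexpct} directly captures the Lyapunov-type statement about selecting a subset of indices to approximate a fractional combination.
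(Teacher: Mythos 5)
Your proposal is correct and matches the paper's own argument: it applies Theorem~\ref{thm:mainexpct} with independent $\{0,1\}$-valued variables of mean $t_i$, uses $\Var[\xi_i] = t_i(1-t_i) \leq 1/4$ together with $(\mathbf{u}_i \mathbf{u}_i^*)^2 \preceq \epsilon\, \mathbf{u}_i \mathbf{u}_i^*$ and $\|\sum_i \mathbf{u}_i \mathbf{u}_i^*\| \leq 1$ to get $\sigma^2 \leq \epsilon/4$, and concludes $4\sigma \leq 2\epsilon^{1/2}$. You merely spell out the variance estimate slightly more explicitly than the paper does.
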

The corollary follows immediately from Theorem~\ref{thm:mainexpct} by
choosing as the $\xi_i$ a set of independent
 $\setof{0,1}$-valued random variables with means $t_i \in [0,1]$.
Note then that $\Var[ \xi_i ] = t_i (1-t_i) \leq 1/4$, and so by the assumptions
$\max_i \| \mathbf{u}_i \mathbf{u}_i^*\| \leq \epsilon$ and $ \sum_{i=1}^n  \mathbf{u}_i \mathbf{u}_i^*
  \preceq I$, we have $\sigma^2 \leq \epsilon/4 $, and we obtain
  Theorem~\ref{thm:KS} as consequence of the corollary.



\subsection{Our techniques}
Our proof is based on the method of interlacing polynomials introduced
in \cite{MSS15a, MSS15b}.  
One difficulty in applying the method of interlacing polynomials is the
inability to control both the largest and smallest eigenvalues
of a matrix simultaneously.
Various techniques and restrictions are used to overcome this problem
in \cite{MSS15a, MSS15b} (studying bipartite graphs, assuming
isotropic position).
We develop a seemingly more natural approach for simultaneously controlling both the
largest and smallest roots of the matrices we consider.
We study polynomials that can be viewed as expected characteristic
polynomials, but are more easily understood as the expectation of a product of multiple
determinants.
The using of a product of two determinants helps us bound the upper and lower eigenvalues
both at the same time.

We introduce an analytic expression for the expected
polynomials in terms of linear operators that use second order derivatives.
This has the advantage of allowing us to gain stronger control over
the movement of roots of polynomials under the linear operators we
apply than those used by \cite{MSS15b}.
This is because movement of the roots now depends on the curvature of our polynomials in a
favorable way. 
Interestingly, linear operators containing second order derivatives also
appear in the work of \cite{ag14} but for different reasons.
This lets us reuse one of their lemmas for bounding the positions of the
roots of a real stable polynomial.





\section*{Acknowledgements}  The authors would like to thank the anonymous referees for carefully reading the manuscript and providing helpful comments that improved the presentation of this work.
\section{Preliminaries}
We gather several basic linear algebraic and analytic facts in the following sections.
\subsection{Linear Algebra}
\begin{lemma} \label{lem:rankone}
Let $\mathbf{x} \in \R^n$.  Then
$$
\det(I - t\mathbf{x} \mathbf{x}^{\T}) = 1 - t \mathbf{x}^{\T} \mathbf{x} .
$$
\end{lemma}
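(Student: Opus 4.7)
The plan is to exploit the fact that $\mathbf{x}\mathbf{x}^{\T}$ is a rank-one matrix whose eigenvalues can be written down explicitly, and then use that the determinant is the product of eigenvalues. If $\mathbf{x}=0$, both sides of the claim equal $1$, so I may assume $\mathbf{x}\neq 0$.

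First I would verify that $\mathbf{x}$ is an eigenvector of $\mathbf{x}\mathbf{x}^{\T}$ with eigenvalue $\mathbf{x}^{\T}\mathbf{x}$, since
\[
(\mathbf{x}\mathbf{x}^{\T})\mathbf{x} = \mathbf{x}(\mathbf{x}^{\T}\mathbf{x}) = (\mathbf{x}^{\T}\mathbf{x})\mathbf{x}.
\]
Next, any vector $\mathbf{y}$ orthogonal to $\mathbf{x}$ satisfies $(\mathbf{x}\mathbf{x}^{\T})\mathbf{y} = \mathbf{x}(\mathbf{x}^{\T}\mathbf{y}) = 0$, so $\mathbf{y}$ is an eigenvector with eigenvalue $0$. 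The orthogonal complement of $\mathbf{x}$ has dimension $n-1$, so $\mathbf{x}\mathbf{x}^{\T}$ has spectrum consisting of $\mathbf{x}^{\T}\mathbf{x}$ with multiplicity one and $0$ with multiplicity $n-1$.

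Consequently $I - t\mathbf{x}\mathbf{x}^{\T}$ has eigenvalue $1 - t\mathbf{x}^{\T}\mathbf{x}$ with multiplicity one and eigenvalue $1$ with multiplicity $n-1$, so taking the product yields $\det(I - t\mathbf{x}\mathbf{x}^{\T}) = 1 - t\mathbf{x}^{\T}\mathbf{x}$, as desired. There is really no obstacle here; an equivalent one-line route would be Sylvester's determinant identity $\det(I_n + AB) = \det(I_m + BA)$ applied with $A = -t\mathbf{x}$ and $B = \mathbf{x}^{\T}$, which immediately reduces the $n \times n$ determinant to the $1 \times 1$ scalar $1 - t\mathbf{x}^{\T}\mathbf{x}$. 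Either presentation is short enough that I would include it as written.
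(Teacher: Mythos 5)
Your proof is correct: the spectral argument (eigenvalue $\mathbf{x}^{\T}\mathbf{x}$ on the span of $\mathbf{x}$, eigenvalue $0$ on its orthogonal complement, determinant as product of eigenvalues) is complete, and the alternative via Sylvester's identity $\det(I_n+AB)=\det(I_m+BA)$ is equally valid. The paper states this lemma without proof, treating it as a standard linear-algebra fact, so there is no argument in the paper to compare against; either of your presentations would serve as the omitted justification.
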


\begin{fact}[Jacobi's Formula] \label{fact:Jacobi}
For $A(t) \in \R^{n \times n}$ a function of $t$, 
$$
\frac{d}{dt} \det(A(t)) =  \det(A(t)) \Tr \Big[A^{-1}(t) \frac{d}{dt} A(t)\Big] .
$$
\end{fact}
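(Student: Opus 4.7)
The plan is to reduce Jacobi's formula to the infinitesimal expansion of $\det(I + sM)$ at $s = 0$, and then apply it to the perturbation $A(t+s) \approx A(t) + s A'(t)$. The cleanest route avoids cofactor bookkeeping by factoring out $A(t)$ whenever it is invertible, and handles the general case by a density/continuity argument.

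First I would establish the base identity $\frac{d}{ds}\det(I + sM)\big|_{s=0} = \Tr(M)$. This can be seen either by expanding the Leibniz formula $\det(I + sM) = \sum_{\pi} \sgn(\pi) \prod_i (\delta_{i,\pi(i)} + s M_{i,\pi(i)})$ and keeping only the first-order terms in $s$ (which forces $\pi = \mathrm{id}$ and picks exactly one diagonal entry $M_{ii}$), or via the characteristic-polynomial identity $\det(I + sM) = \prod_k (1 + s\lambda_k(M))$, whose derivative at $0$ is $\sum_k \lambda_k(M) = \Tr(M)$.

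Next, fix $t$ and assume $A(t)$ is invertible. Then I would write
\[
\det(A(t+s)) = \det\!\bigl(A(t) + s A'(t) + O(s^2)\bigr) = \det(A(t)) \cdot \det\!\bigl(I + s A(t)^{-1} A'(t) + O(s^2)\bigr),
\]
using multiplicativity $\det(XY) = \det(X)\det(Y)$. Applying the base identity to the right factor gives
\[
\det(A(t+s)) = \det(A(t)) \cdot \bigl(1 + s\,\Tr[A(t)^{-1} A'(t)] + O(s^2)\bigr),
\]
and dividing by $s$ and sending $s \to 0$ yields the claimed formula. The main routine check here is that the $O(s^2)$ error inside the determinant really contributes only $O(s^2)$ after the expansion, which follows from the smoothness of $\det$ as a polynomial in matrix entries.

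Finally, to remove the invertibility assumption, I would note that both sides of Jacobi's formula are continuous in the entries of $A(t)$ and $A'(t)$ (the left side obviously so; the right side can be rewritten using the adjugate $\mathrm{adj}(A)$, via $\det(A)\,A^{-1} = \mathrm{adj}(A)$, giving $\Tr[\mathrm{adj}(A(t))\, A'(t)]$, which is polynomial in the entries and hence continuous without any invertibility hypothesis). Since invertible matrices are dense in $\R^{n\times n}$, the identity extends by continuity. The only mild subtlety will be choosing the right formulation of the right-hand side when $A(t)$ is singular — I would simply state and prove the identity in the invertible case (which is what is used in the sequel) and remark that the $\mathrm{adj}$-form handles the degenerate case if needed.
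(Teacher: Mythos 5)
Your proof is correct. The paper states Jacobi's formula as a known fact and gives no proof of its own, so there is nothing to compare against; your argument --- reducing to $\frac{d}{ds}\det(I+sM)\big|_{s=0}=\Tr(M)$, factoring out $A(t)$ in the invertible case, and noting the adjugate form $\Tr[\mathrm{adj}(A(t))A'(t)]$ extends by density --- is the standard derivation and fully adequate. The only cosmetic point is that differentiability of $A$ gives an $o(s)$ rather than $O(s^2)$ remainder in $A(t+s)=A(t)+sA'(t)+o(s)$, which changes nothing in the limit; and since the statement (and its use in Proposition~\ref{prop:subisotropic}, where the matrix being differentiated is positive definite at the point of evaluation) already presumes $A^{-1}(t)$ exists, the invertible case alone suffices.
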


\subsection{Real Stability}
\begin{definition}
A multivariate polynomial $p(z_1, \dots, z_n) \in \C[z_1, \dots, z_n]$ is \emph{stable} if it has no zeros in the region $\{(z_1, \dots, z_n) : \Im(z_i) > 0 \text{ for all } 1 \leq i \leq n \}$.  $p$ is \emph{real stable} if $p$ is stable and the coefficients of $p$ are real.  
\end{definition}

\begin{lemma}[Corollary 2.8, \cite{Anari18}] \label{lem:derivativestable}
If $p \in \R[z_1, \dots, z_n]$ is real stable, then for any $c > 0$, so is
$$
(1 -c \partial_{z_i}^2) p(z_1, \dots, z_n)
$$
for all $1 \leq i \leq n$.

\end{lemma}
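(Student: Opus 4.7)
The plan is to factor the second-order differential operator as
$$1 - c\,\partial_{z_i}^2 = \bigl(1 + \sqrt{c}\,\partial_{z_i}\bigr)\bigl(1 - \sqrt{c}\,\partial_{z_i}\bigr),$$
which reduces the lemma to the claim that each first-order operator $T_a := 1 + a\,\partial_{z_i}$ preserves real stability for every $a \in \R$. These two factors commute, they compose to $1 - c\,\partial_{z_i}^2$, and they obviously send real-coefficient polynomials to real-coefficient polynomials, so applying the preservation statement with $a = -\sqrt c$ followed by $a = \sqrt c$ delivers the lemma.

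To show that $T_a$ preserves real stability, I would fix $i = 1$ without loss of generality. For arbitrary $z_2, \dots, z_n$ with each $\Im z_j > 0$, consider the univariate restriction $\tilde p(z_1) := p(z_1, z_2, \dots, z_n)$. By the real stability of $p$, this $\tilde p$ is a nonzero polynomial whose roots $r_1, \dots, r_d$ all satisfy $\Im r_k \leq 0$: a root of $\tilde p$ in the open upper half-plane, or identical vanishing of $\tilde p$, would yield a zero of $p$ at a point with every coordinate in the upper half-plane, contradicting real stability.

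The crux is then a Herglotz-type sign computation on the logarithmic derivative: for any $z_1$ with $\Im z_1 > 0$,
$$\Im\,\frac{\tilde p'(z_1)}{\tilde p(z_1)} = \Im \sum_{k=1}^d \frac{1}{z_1 - r_k} = -\sum_{k=1}^d \frac{\Im z_1 - \Im r_k}{|z_1 - r_k|^2} < 0,$$
because each $\Im z_1 - \Im r_k \geq \Im z_1 > 0$. The real number $-1/a$ is therefore never attained by $\tilde p'(z_1)/\tilde p(z_1)$ in the upper half-plane, so $(T_a p)(z_1, z_2, \dots, z_n) = \tilde p(z_1) + a\,\tilde p'(z_1)$ does not vanish whenever $\Im z_1 > 0$. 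Since the choice of $(z_2, \dots, z_n)$ with each $\Im z_j > 0$ was arbitrary, $T_a p$ has no zeros on $\{z \in \C^n : \Im z_j > 0 \text{ for all } j\}$, i.e., $T_a p$ is real stable.

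The main obstacle is spotting the factorization into commuting first-order operators with real symbols; once this viewpoint is adopted, the rest is a one-line Herglotz-type sign computation together with the standard reduction to a univariate fiber, and no further multivariate stability machinery is required.
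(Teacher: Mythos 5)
Your argument is correct. Note, however, that the paper itself does not prove this lemma at all: it is imported verbatim as Corollary 2.8 of the cited reference [Anari18], where it is obtained from the general machinery of stability preservers (closure of real stability under $\partial_{z_i}$ and under the operators $1+a\partial_{z_i}$, ultimately resting on Lieb--Sokal/Borcea--Br\"and\'en type results). Your route is a self-contained elementary substitute: the factorization $1-c\,\partial_{z_i}^2=(1+\sqrt{c}\,\partial_{z_i})(1-\sqrt{c}\,\partial_{z_i})$ (valid since the two factors commute and $\sqrt{c}\in\R$ keeps coefficients real) reduces everything to showing $1+a\,\partial_{z_i}$ preserves stability, and your fiber-wise Herglotz computation does exactly that: on each fiber $z_2,\dots,z_n$ in the open upper half-plane, $\tilde p$ is not identically zero, its roots lie in the closed lower half-plane, and $\Im\bigl(\tilde p'/\tilde p\bigr)<0$ in the upper half-plane, so $\tilde p'/\tilde p$ never equals the real number $-1/a$. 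The only cosmetic points worth tightening are the degenerate cases: when $a=0$ the operator is the identity, and when $\tilde p$ is constant in $z_1$ the sum over roots is empty (so it equals $0$ rather than being strictly negative), but then $\tilde p + a\tilde p'=\tilde p\neq 0$ anyway; also one should note that after the first factor is applied the intermediate polynomial is nonzero (its leading term in $z_i$ is unchanged), so the second application is legitimate. With those remarks your proof is complete, and its advantage over the citation is that it needs no multivariate stability-preserver theory beyond restriction to fibers.
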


\begin{lemma}[Proposition 2.4, \cite{bb08}] \label{lem:determinantstable}
If $A_1, \dots, A_n$ are positive semidefinite symmetric matrices, then the polynomial
$$
\det \left( \sum_{i=1}^n z_i A_i \right)
$$
is real stable.  
\end{lemma}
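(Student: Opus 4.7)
The plan is to verify real stability directly from the definition. The coefficients of $p(z_1, \ldots, z_n) := \det\big(\sum_i z_i A_i\big)$ are real because each $A_i$ is real symmetric, so the content is to show $p$ has no zero when all $\Im z_i > 0$.

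First I would fix such a point, write $z_i = a_i + \mathbf{i}\, b_i$ with $b_i > 0$, and decompose
\[
\sum_{i=1}^n z_i A_i \;=\; M + \mathbf{i}\, N, \qquad M := \sum_i a_i A_i, \quad N := \sum_i b_i A_i,
\]
so $M$ is real symmetric (in particular Hermitian) and $N$ is a strictly positive combination of PSD matrices, hence PSD. The key linear-algebraic observation I would then invoke is: whenever $M$ is Hermitian and $N$ is Hermitian and \emph{positive definite}, $M + \mathbf{i}\, N$ is invertible. The one-line argument is that any $x$ in its kernel satisfies $x^*(M + \mathbf{i}\, N)x = x^*Mx + \mathbf{i}\, x^*Nx = 0$; since both quadratic forms are real, each vanishes separately, and $x^*Nx = 0$ forces $x = 0$ by positive definiteness. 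This handles the generic case $N \succ 0$, equivalently where the common kernel $\bigcap_i \ker A_i$ is trivial.

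The only remaining point is the degenerate case where $N$ is merely PSD, which happens exactly when $\bigcap_i \ker A_i$ is nontrivial. For any $x$ in this common kernel, $(\sum_i z_i A_i)x = 0$ for every $z$, so $p$ is identically zero, which I would treat either as vacuously stable or, more carefully, by perturbing $A_i \mapsto A_i + \epsilon I$ to make the common kernel trivial, applying the argument above for each $\epsilon > 0$, and passing to the limit $\epsilon \to 0^+$ via Hurwitz's theorem on joint nonvanishing. Thus the anticipated obstacle is purely cosmetic; the substance reduces to the one-line invertibility fact about $M + \mathbf{i}\, N$.
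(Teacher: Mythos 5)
Your proof is correct, and it is worth noting that the paper itself gives no argument for this lemma at all: it is imported verbatim from Borcea--Br\"and\'en \cite{bb08} (Proposition 2.4). So the comparison is between your self-contained proof and the cited one, and they are essentially the same. Writing $\sum_i z_i A_i = M + \mathbf{i}\,N$ with $M=\sum_i a_i A_i$ Hermitian and $N=\sum_i b_i A_i\succeq 0$, and showing $M+\mathbf{i}\,N$ is nonsingular when $N\succ 0$, is exactly the standard route; your quadratic-form argument for invertibility ($x^*Mx$ and $x^*Nx$ are real, so both must vanish) is a clean substitute for the usual conjugation trick (reduce to $N^{-1/2}MN^{-1/2}+\mathbf{i}\,I$, which is nonsingular since a Hermitian matrix has real spectrum). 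You also correctly identify that $N\succ 0$ for every choice of $b_i>0$ precisely when $\bigcap_i \ker A_i=\{0\}$.

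The one point to tighten is the degenerate case $\bigcap_i \ker A_i\neq\{0\}$, where $p\equiv 0$. Calling this ``vacuously stable'' conflicts with the paper's literal definition (the zero polynomial vanishes everywhere in the upper half-region), and the Hurwitz perturbation does not repair it: Hurwitz gives ``nonvanishing or identically zero'' for the limit, and here the limit is identically zero, so you land exactly in the excluded case rather than escaping it. The accurate statement, as in \cite{bb08}, is that the determinant is real stable \emph{or identically zero} (equivalently, one adopts the convention that the zero polynomial counts as stable). This is immaterial for the present paper, since in every application the coefficient matrix of the variable $x$ is $I$, whose kernel is trivial, so the polynomial in question is never identically zero.
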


We also need that real stability is preserved under fixing variables
to real values (see \cite[Lemma 2.4(d)]{wagner}).
\begin{proposition} \label{prop:setreal}
If $p\in\R[z_1,\ldots,z_m]$ is real stable and $a\in \R$, then
$p|_{z_1=a}=p(a,z_2,\ldots,z_m)\in\R[z_2,\ldots,z_m]$ is real stable.
\end{proposition}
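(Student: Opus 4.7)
The plan is to prove this by a limiting argument based on Hurwitz's theorem from several complex variables. Intuitively, since $a$ is real, I can approach $a$ from the upper half-plane, invoke the real stability of $p$ at each approximant, and then pass to the limit in a way that preserves zero-freeness.

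In detail, first I would define, for each $\varepsilon > 0$, the polynomial $q_\varepsilon(z_2,\ldots,z_m) = p(a+i\varepsilon, z_2,\ldots,z_m) \in \C[z_2,\ldots,z_m]$. Because $a+i\varepsilon$ has positive imaginary part and $p$ is real stable, $q_\varepsilon$ has no zeros in the open upper polyhalf-plane
$$
U = \{(z_2,\ldots,z_m) \in \C^{m-1} : \Im(z_j) > 0 \text{ for all } 2 \leq j \leq m\}.
$$
Next, I would observe that as $\varepsilon \to 0^+$, the coefficients of $q_\varepsilon$, which are polynomials in $\varepsilon$ obtained by expanding $p$ in its first coordinate, converge to those of $p(a, z_2,\ldots,z_m)$. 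Hence $q_\varepsilon \to p|_{z_1=a}$ uniformly on compact subsets of $\C^{m-1}$. Then, by Hurwitz's theorem in several complex variables, a locally uniform limit of zero-free holomorphic functions on the connected open set $U$ is either identically zero on $U$ or itself zero-free on $U$. The real coefficients of $p|_{z_1=a}$ are immediate since $p \in \R[z_1,\ldots,z_m]$ and $a \in \R$, so aside from the (by-convention-allowed, or trivially handled) identically-zero case, $p|_{z_1=a}$ is real stable.

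The main obstacle will be invoking the multivariate Hurwitz theorem cleanly. If one prefers to avoid citing it directly, one can reduce to the one-variable statement: assume a zero $w = (w_2,\ldots,w_m) \in U$ of the limit, restrict $q_\varepsilon$ and $p|_{z_1=a}$ to a suitable complex line $\{w + t v : t \in \C\}$ chosen so that the line stays inside $U$ in a neighborhood of $w$ (any fixed $v$ with sufficiently small imaginary components works), and then apply the classical one-variable Hurwitz theorem on a small disk to derive a contradiction with the zero-freeness of each $q_\varepsilon$. This reduces the argument to the standard scalar case while preserving the overall structure, and also mirrors the approach used in the proof of Lemma 2.4(d) in the Wagner reference cited in the proposition.
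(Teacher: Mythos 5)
Your proof is correct and is essentially the standard argument: the paper itself gives no proof of this proposition, deferring to Wagner's Lemma 2.4(d), whose proof is exactly the Hurwitz-type limiting argument you describe (substitute $z_1 = a + i\varepsilon$, use zero-freeness on the upper polyhalf-plane, and pass to the limit as $\varepsilon \to 0^+$). The only point worth flagging is the identically-zero case you mention: under the paper's literal definition of stability the zero polynomial is not stable (e.g.\ $p = z_1 - a$ restricts to the zero polynomial), so one either adopts the usual convention that the zero polynomial counts as (real) stable or notes that this degenerate case never arises in the paper's applications, where the restricted polynomial is a nonzero determinant-based polynomial.
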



\subsection{Interlacing Families}
We recall the definition and consequences of interlacing families from \cite{MSS15a}.
\begin{definition} \label{def:interlacing}
We say a real rooted polynomial $g(x) = C \prod_{i=1}^{n-1} (x - \alpha_i)$ \emph{interlaces} the real rooted polynomial $f(x) = C' \prod_{i=1}^n (x - \beta_i)$ if 
$$
\beta_1 \leq \alpha_1 \leq \dots \leq \alpha_{n-1} \leq \beta_n.
$$
Polynomials $f_1, \dots, f_k$ have a \emph{common interlacing} if there is a polynomial $g$ that interlaces each of the $f_i$.
\end{definition}
The following lemma relates the roots of a sum of polynomials to those of a common interlacer. 
\begin{lemma}[Lemma 4.2, \cite{MSS15a}] \label{lem:rootupperbound}
Let $f_1, \dots, f_k$ be degree $d$ real rooted polynomials with positive leading coefficients.  Define 
$$
f_{\emptyset} := \sum_{i=1}^k f_i.
$$
If $f_1, \dots, f_k$ have a common interlacing then there exists an $i$ for which the largest root of $f_i$ is upper bounded by the largest root of $f_0$.
\end{lemma}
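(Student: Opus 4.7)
The plan is a sign-chasing argument using the common interlacer to locate a point where every $f_i$ is nonpositive, which then forces $f_\emptyset$ to have a root beyond that point.

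Let $g$ be a common interlacer, with real roots $\alpha_1 \leq \cdots \leq \alpha_{d-1}$, and for each $i$ let $\beta_1^{(i)} \leq \cdots \leq \beta_d^{(i)}$ be the roots of $f_i$. By Definition \ref{def:interlacing}, $\beta_j^{(i)} \leq \alpha_j \leq \beta_{j+1}^{(i)}$ for every $j$ and every $i$. In particular, the second-largest root of each $f_i$ is at most $\alpha_{d-1}$ and the largest root of each $f_i$ is at least $\alpha_{d-1}$. Define
$$
\beta^* := \min_i \beta_d^{(i)}
$$
and let $i^*$ be an index achieving the minimum. By the interlacing just noted, for every $i$ we have $\beta_{d-1}^{(i)} \leq \alpha_{d-1} \leq \beta^* \leq \beta_d^{(i)}$.

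The point $\beta^*$ therefore lies in the closed interval between the two largest roots of each $f_i$. Writing $f_i(x) = C_i \prod_{j=1}^d (x - \beta_j^{(i)})$ with $C_i > 0$, exactly one factor $(x - \beta_d^{(i)})$ is nonpositive at $x = \beta^*$ while all other factors $(x - \beta_j^{(i)})$ for $j \leq d-1$ are nonnegative there. Hence $f_i(\beta^*) \leq 0$ for all $i$, and therefore
$$
f_\emptyset(\beta^*) \;=\; \sum_{i=1}^k f_i(\beta^*) \;\leq\; 0.
$$

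Since each $f_i$ has positive leading coefficient, so does $f_\emptyset$, and thus $f_\emptyset(x) \to +\infty$ as $x \to +\infty$. Combined with $f_\emptyset(\beta^*) \leq 0$, the intermediate value theorem yields a real root of $f_\emptyset$ in $[\beta^*, \infty)$, so the largest root of $f_\emptyset$ is at least $\beta^* = \beta_d^{(i^*)}$, which is the largest root of $f_{i^*}$. (Real-rootedness of $f_\emptyset$ itself, when needed, follows from the standard fact that polynomials sharing a common interlacer have a real-rooted sum.)

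There is essentially no hard step here; the only thing to be careful with is the sign bookkeeping at $\beta^*$, which is why choosing the \emph{minimum} of the largest roots is crucial — any larger value of $x$ would put us past $\beta_d^{(i^*)}$ and flip the sign of $f_{i^*}$.
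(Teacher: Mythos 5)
Your proof is correct. The paper does not prove this lemma at all---it imports it verbatim as Lemma 4.2 of \cite{MSS15a}---so there is no internal proof to compare against; your argument is essentially the standard one from that reference, and your sign bookkeeping checks out: with $\beta^* = \min_i \beta_d^{(i)}$ one has $\beta_{d-1}^{(i)} \leq \alpha_{d-1} \leq \beta^* \leq \beta_d^{(i)}$ for every $i$, so each $f_i(\beta^*) \leq 0$, hence $f_\emptyset(\beta^*) \leq 0$, and positivity of the leading coefficient plus the intermediate value theorem puts the largest root of $f_\emptyset$ at or beyond $\beta^*$, the largest root of $f_{i^*}$. Your variant of evaluating everything at the minimal largest root is a nice touch: the argument in \cite{MSS15a} instead looks at the largest root $\mu$ of $f_\emptyset$ and argues some $f_i$ must be nonnegative at $\mu$, which requires a small case analysis (e.g.\ when $f_i(\mu)=0$ with $\mu$ below $\beta_d^{(i)}$); your choice of evaluation point sidesteps that entirely, and your closing parenthetical about real-rootedness of $f_\emptyset$ is indeed not needed, since the IVT already supplies the real root you compare against.
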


\begin{definition} \label{def:interlacingfamily}
Let $S_1, \dots, S_n$ be finite sets.  For each choice of assignment $s_1, \dots, s_n \in S_1 \times \dots \times S_n$, let $f_{s_1, \dots, s_n}(x)$ be a real rooted degree $d$ polynomial with positive leading coefficient.  For a partial assignment $s_1, \dots, s_k \in S_1 \times \dots \times S_k$ for $k < n$, we define
\begin{equation} \label{eq:conditionalpoly}
f_{s_1, \dots, s_k } := \sum_{s_{k+1} \in S_{k+1}, \dots, s_n \in S_n} f_{s_1, \dots, s_k, s_{k+1}, \dots, s_n}.
\end{equation}
Note that this is compatible with our definition of $f_{\emptyset}$ from Lemma \ref{lem:rootupperbound}.  We say that the polynomials $\{f_{s_1, \dots, s_n} \}$ form an \emph{interlacing family} if for any $k = 0, \dots, n-1$ and all $s_1, \dots, s_k \in S_1 \times \dots \times S_k$, the polynomials have a common interlacing. 
\end{definition}
The following lemma relates the roots of the interlacing family to those of $f_{\emptyset}$. 
\begin{lemma}[Theorem 4.4, \cite{MSS15a}] \label{lem:rootbound}
Let $S_1, \dots, S_n$ be finite sets and let $\{f_{s_1, \dots, s_n}\}$ be an interlacing family. Then there exists some $s_1, \dots, s_n \in S_1 \times \dots \times S_n$ so that the largest root of $f_{s_1, \dots, s_n}$ is upper bounded by the largest root of $f_{\emptyset}$.
\end{lemma}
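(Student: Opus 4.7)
The plan is to iterate Lemma \ref{lem:rootupperbound} down the tree of partial assignments, peeling off one coordinate at a time. We construct the desired assignment $s_1,\ldots,s_n$ greedily: at each step, we pick the next coordinate so that the largest root can only decrease (weakly) along the chain $f_{\emptyset} \to f_{s_1} \to f_{s_1,s_2} \to \cdots \to f_{s_1,\ldots,s_n}$.

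For the first step, observe that by Definition \ref{def:interlacingfamily} (with $k=0$), the polynomials $\{f_{s_1} : s_1 \in S_1\}$ share a common interlacing, so in particular each is real rooted with positive leading coefficient (the latter because each $f_{s_1,\ldots,s_n}$ does, and $f_{s_1}$ is a nonnegative linear combination of these via \eqref{eq:conditionalpoly}). Their sum is exactly $f_{\emptyset}$ by \eqref{eq:conditionalpoly}. Applying Lemma \ref{lem:rootupperbound} yields an index $s_1 \in S_1$ such that the largest root of $f_{s_1}$ is at most the largest root of $f_{\emptyset}$.

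The inductive step is the same argument applied one level deeper. Suppose we have already chosen $s_1,\ldots,s_k$ with the largest root of $f_{s_1,\ldots,s_k}$ bounded by that of $f_{\emptyset}$. By the interlacing family property applied to the partial assignment $s_1,\ldots,s_k$, the polynomials $\{f_{s_1,\ldots,s_k,s_{k+1}} : s_{k+1} \in S_{k+1}\}$ share a common interlacing (and have positive leading coefficients by the same aggregation argument as above), while their sum is $f_{s_1,\ldots,s_k}$ by \eqref{eq:conditionalpoly}. Lemma \ref{lem:rootupperbound} then produces $s_{k+1}$ such that the largest root of $f_{s_1,\ldots,s_k,s_{k+1}}$ is at most the largest root of $f_{s_1,\ldots,s_k}$, which in turn is at most the largest root of $f_{\emptyset}$ by the inductive hypothesis. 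After $n$ steps we obtain the desired $s_1,\ldots,s_n$.

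The only delicate point is checking that the hypotheses of Lemma \ref{lem:rootupperbound} really are available at every node of the tree: we need each family of children to be real rooted with positive leading coefficients and to admit a common interlacing. The common interlacing is precisely what Definition \ref{def:interlacingfamily} gives us, which in turn forces each $f_{s_1,\ldots,s_{k+1}}$ to be real rooted. Positivity of the leading coefficient propagates upward trivially from the leaves, where it holds by hypothesis, since all the combining operations in \eqref{eq:conditionalpoly} are sums with coefficient $1$. So there is no real obstacle here, and the result follows by straightforward induction on the tree depth.
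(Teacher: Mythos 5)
Your proof is correct and follows essentially the same route as the source: the paper states this lemma without proof, citing \cite{MSS15a}, and your greedy descent down the tree of partial assignments, applying Lemma~\ref{lem:rootupperbound} at each node to the children (whose common interlacing comes from Definition~\ref{def:interlacingfamily} and whose sum is the parent by \eqref{eq:conditionalpoly}), is exactly the argument of Theorem~4.4 there. Your side remarks are also right: real-rootedness of the internal-node polynomials follows from the common interlacing of their children (via Lemma~\ref{lem:commoninterlacing}), and positive leading coefficients and degree $d$ propagate up from the leaves since the combining operations are plain sums.
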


Finally, we recall a relationship between real-rootedness and common interlacings which has been discovered independently several times \cite{Dedieu94, Fell80, CS07}.
\begin{lemma} \label{lem:commoninterlacing}
Let $f_1, \dots, f_k$ be univariate polynomials of the same degree with positive leading coefficient.  Then $f_1, \dots, f_k$ have a common interlacing if and only if $\sum_{i=1}^k \alpha_i f_i$ is real rooted for all nonnegative $\alpha_i$ such that $\sum_i \alpha_i = 1$.
\end{lemma}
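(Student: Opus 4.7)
My approach is to prove the two directions of the iff separately. The forward direction — that a common interlacer forces all convex combinations to be real-rooted — should follow from a straightforward sign analysis, while the reverse direction is the substantive part of the lemma; I would handle it by first reducing to the case of two polynomials and then arguing by contradiction via a root count.

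For the forward direction, I would fix a common interlacer $g$ with roots $\alpha_1 \leq \cdots \leq \alpha_{d-1}$ and denote the roots of each $f_i$ by $\beta_{1,i} \leq \cdots \leq \beta_{d,i}$. The interlacing relation $\beta_{k,i} \leq \alpha_k \leq \beta_{k+1,i}$ implies that in the product $f_i(\alpha_j) = c_i \prod_k (\alpha_j - \beta_{k,i})$ (with $c_i > 0$ the leading coefficient) exactly $j$ factors are nonnegative and $d-j$ are nonpositive, so $\sgn f_i(\alpha_j) \in \{0, (-1)^{d-j}\}$ independently of $i$. Any convex combination $f = \sum_i \alpha_i f_i$ therefore inherits the same sign pattern at each $\alpha_j$, and since these signs alternate in $j$, the intermediate value theorem produces a real root of $f$ in each interval $(\alpha_j, \alpha_{j+1})$; two further roots in the unbounded tails then follow from the positive-leading-coefficient asymptotics, accounting for all $d$ roots of $f$.

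For the reverse direction, I would first reduce to $k=2$: a common interlacer $\gamma_1 \leq \cdots \leq \gamma_{d-1}$ exists iff $\max_i \beta_{j,i} \leq \min_{i'} \beta_{j+1,i'}$ for every $j$ (since $\gamma_j$ must lie in $[\max_i \beta_{j,i}, \min_i \beta_{j+1,i}]$), which is equivalent to every pair $(f_i,f_{i'})$ admitting a common interlacer. Assuming for contradiction that some pair $f, g$ has $\beta^f_j > \beta^g_{j+1}$ while every $h_t := tf + (1-t)g$, $t \in [0,1]$, is real-rooted, I would choose a generic $x_0 \in (\beta^g_{j+1}, \beta^f_j)$ — not a root of $f$ or $g$, and such that whenever $h_{t^*}(x_0) = 0$ for some $t^* \in [0,1]$ we have $h_{t^*}'(x_0) \neq 0$ — and track $N(t) := \#\{\text{roots of } h_t \text{ that are } \leq x_0\}$. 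The placement of $x_0$ forces $N(0) \geq j+1$ and $N(1) \leq j-1$. But $h_t(x_0) = tf(x_0) + (1-t)g(x_0)$ is linear in $t$, so at most one $t^* \in [0,1]$ satisfies $h_{t^*}(x_0) = 0$; genericity makes $x_0$ a simple root there, so $N(t)$ changes by $\pm 1$ across $t^*$, contradicting $|N(0)-N(1)| \geq 2$.

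The main obstacle is justifying the existence of such a generic $x_0$. The "bad" condition (that $x_0$ be a multiple root of the unique $h_{t^*}$) reduces, after eliminating $t^*$ from $h_{t^*}(x_0) = 0 = h_{t^*}'(x_0)$, to $(f'g - fg')(x_0) = 0$ — a polynomial equation of degree at most $2d-2$, hence with only finitely many solutions unless it holds identically. The identity case forces $f/g$ to be constant, so $f$ and $g$ share all roots and trivially admit a common interlacer, meaning this degenerate case introduces no difficulty.
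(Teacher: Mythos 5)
The paper never proves Lemma~\ref{lem:commoninterlacing}; it quotes it from the cited references (Dedieu, Fell, Chudnovsky--Seymour), and your argument is essentially the standard proof found there, so the comparison is with that route rather than with anything in the paper. The substantive (converse) direction is handled correctly: the reduction to pairs via the intervals $[\max_i \beta_{j,i}, \min_i \beta_{j+1,i}]$ is exactly right, and along the segment $h_t = tf+(1-t)g$ the leading coefficient $t c_f + (1-t)c_g$ stays positive, so the degree is constantly $d$, all roots (real by hypothesis) stay in a fixed compact set and vary continuously, hence $N(t)$ can change only at the at most one $t^*\in[0,1]$ with $h_{t^*}(x_0)=0$; your Wronskian observation does legitimately produce an $x_0\in(\beta^g_{j+1},\beta^f_j)$ at which any such crossing is a simple root, and the degenerate case $f'g-fg'\equiv 0$ is correctly dismissed since it forces $f$ to be a positive multiple of $g$. (One small omission: before speaking of the roots $\beta_{j,i}$ you should note that each individual $f_i$ is itself real rooted, by taking the weight vector concentrated on index $i$.)

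The forward direction, however, is stated too loosely. If the convex combination $f$ vanishes at some interlacer root $\alpha_j$, then $\sgn f(\alpha_j)=0$, and the intermediate value theorem gives no root in the adjacent \emph{open} intervals; e.g.\ $f_1=f_2=x^2$ with interlacer $x$: neither open tail contains a root, so the count ``one root per open interval plus two in the tails'' fails as written (the double root at $0$ is what saves the conclusion). The standard patches are: (i) observe that, since all $f_i(\alpha_j)$ carry the same weak sign $(-1)^{d-j}$ and the weights are nonnegative, $f(\alpha_j)=0$ forces $f_i(\alpha_j)=0$ for every $i$ with positive weight; then factor $(x-\alpha_j)$ out of those $f_i$ and out of the interlacer, check that the quotients still have a common interlacing, and induct on the degree; or (ii) count roots with multiplicity in the closed intervals $(-\infty,\alpha_1],[\alpha_1,\alpha_2],\dots,[\alpha_{d-1},\infty)$ and do the endpoint bookkeeping explicitly. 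Either fix is routine, but as written this step is not justified. A cosmetic point: you use $\alpha$ both for the convex weights in the statement and for the roots of the interlacer; rename one of them.
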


\section{Expected Characteristic Polynomial}
Instead of working with the random polynomial $\det(x I - \sum_{i=1}^n \xi_i \mathbf{u}_i \mathbf{u}_i^{\T})$, we consider
$$
\det\left(x^2 I - \Big(\sum_{i=1}^n \xi_i \mathbf{u}_i \mathbf{u}_i^{\T}\Big)^2 \right) = \det\left( x I - \sum_{i=1}^n \xi_i \mathbf{u}_i \mathbf{u}_i^{\T} \right) \det\left( x I + \sum_{i=1}^n \xi_i \mathbf{u}_i \mathbf{u}_i^{\T} \right).
$$
Observe that the largest root $\lambda_{\max}$
of this polynomial is
\begin{equation}
  \label{eq:rootisnorm}
  \lambda_{\max} \left( 
\det\left(x^2 I - \Big(\sum_{i=1}^n \xi_i \mathbf{u}_i \mathbf{u}_i^{\T}\Big)^2 \right)
\right)
=
\left\| \sum_{i=1}^n \xi_i \mathbf{u}_i \mathbf{u}_i^{\T} \right\|
.
\end{equation}

We gather some results that will allow us to extract an analytic expression for the expected characteristic polynomial.
\begin{lemma} \label{lem:expectedchar}
For positive semidefinite (PSD) matrices 
$M, N \in \R^{m \times m}$, $v \in \R^m$ and $\xi$ a random variable with zero mean and variance $\tau^2$,
\begin{equation} \label{eq:expectedchar}
\E_{\xi} \left[ \det(M - \xi \mathbf{v} \mathbf{v}^{\T}) \det(N + \xi \mathbf{v} \mathbf{v}^{\T}) \right] = \left(1 -
  \frac{1}{2} \frac{d^2}{dt^2} \right) \Bigg |_{t=0} \det(M + t
\tau \mathbf{v} \mathbf{v}^{\T}) \det(N + t \tau \mathbf{v} \mathbf{v}^{\T}).
\end{equation}
\end{lemma}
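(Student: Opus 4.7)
The plan is to reduce both sides to the same explicit polynomial expression in two scalar quantities, using the matrix determinant lemma (rank-one update) in the form $\det(A + s\,\mathbf{v}\mathbf{v}^{\T}) = \det(A) + s\,\mathbf{v}^{\T}\mathrm{adj}(A)\,\mathbf{v}$, which holds with no invertibility assumption on $A$ (and is the general version of Lemma~\ref{lem:rankone}). Set $a := \mathbf{v}^{\T}\mathrm{adj}(M)\,\mathbf{v}$ and $b := \mathbf{v}^{\T}\mathrm{adj}(N)\,\mathbf{v}$.

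First I would analyze the left-hand side. By the rank-one update formula, $\det(M - \xi \mathbf{v}\mathbf{v}^{\T}) = \det(M) - \xi\,a$ and $\det(N + \xi \mathbf{v}\mathbf{v}^{\T}) = \det(N) + \xi\,b$, each affine in $\xi$. Their product is the quadratic in $\xi$
$$
\det(M)\det(N) + \xi\bigl(b\det(M) - a\det(N)\bigr) - \xi^2\,ab.
$$
Taking expectation and using $\E[\xi]=0$, $\E[\xi^2]=\tau^2$, the LHS collapses to $\det(M)\det(N) - \tau^2\,ab$.

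Next I would compute the right-hand side in the same way. The two factors inside the product are $\det(M + t\tau\mathbf{v}\mathbf{v}^{\T}) = \det(M) + t\tau\,a$ and $\det(N + t\tau\mathbf{v}\mathbf{v}^{\T}) = \det(N) + t\tau\,b$, each affine in $t$. Their product is the quadratic in $t$
$$
P(t) := \det(M)\det(N) + t\tau\bigl(a\det(N) + b\det(M)\bigr) + t^2\tau^2\,ab.
$$
Then $P(0) = \det(M)\det(N)$ and $\tfrac{1}{2}P''(0) = \tau^2\,ab$, so
$$
\left(1 - \tfrac{1}{2}\tfrac{d^2}{dt^2}\right)\Big|_{t=0} P(t) = \det(M)\det(N) - \tau^2\,ab,
$$
which matches the LHS, completing the proof.

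The only minor obstacle worth noting is that Lemma~\ref{lem:rankone} is stated only for the identity matrix, so I would invoke the general adjugate form of the matrix determinant lemma (or equivalently observe that $s \mapsto \det(A + s\mathbf{v}\mathbf{v}^{\T})$ is affine, which follows from multilinearity of the determinant in its rows together with the rank-one structure of $\mathbf{v}\mathbf{v}^{\T}$). After that, the identity is just bookkeeping: both sides are quadratic polynomials with matching constant and quadratic coefficients, and the odd-moment condition $\E[\xi]=0$ exactly kills the linear term on the LHS, mirroring the fact that the operator $\left(1-\tfrac{1}{2}\tfrac{d^2}{dt^2}\right)\big|_{t=0}$ ignores the linear term in $t$ on the RHS. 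This correspondence between $\E[\xi^{2k}]$ moments and the action of differential operators evaluated at zero is the key conceptual takeaway: the factor $\tau$ in $t\tau\mathbf{v}\mathbf{v}^{\T}$ is precisely tuned so that $\tfrac{1}{2}\partial_t^2|_{t=0}$ reproduces the second moment $\tau^2$.
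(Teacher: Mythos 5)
Your proof is correct, and it takes a cleaner route than the paper's. The paper first assumes $M,N$ are positive definite, factors out $\det(M)\det(N)$, conjugates by $M^{-1/2}$ and $N^{-1/2}$ so that the rank-one identity of Lemma~\ref{lem:rankone} applies, computes both sides as $\det(M)\det(N)(1-\tau^2\,\mathbf{a}^{\T}\mathbf{a}\,\mathbf{b}^{\T}\mathbf{b})$ with $\mathbf{a}=M^{-1/2}\mathbf{v}$, $\mathbf{b}=N^{-1/2}\mathbf{v}$, and then extends to the PSD case by a continuity argument via Hurwitz's theorem. Your version replaces the invertibility-dependent reduction with the adjugate form of the matrix determinant lemma, $\det(A+s\,\mathbf{v}\mathbf{v}^{\T})=\det(A)+s\,\mathbf{v}^{\T}\mathrm{adj}(A)\mathbf{v}$, which is a polynomial identity valid for every square $A$ (or, as you note, follows from affineness of $s\mapsto\det(A+s\,\mathbf{v}\mathbf{v}^{\T})$ by multilinearity). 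The underlying bookkeeping is the same in both proofs --- both sides are quadratics whose constant and quadratic coefficients match, with $\E[\xi]=0$ killing the linear term just as the operator $\bigl(1-\tfrac{1}{2}\tfrac{d^2}{dt^2}\bigr)\big|_{t=0}$ ignores it --- but your argument needs no invertibility, no limiting step, and in fact no positive semidefiniteness of $M$ and $N$ at all, so it is both more elementary in its ingredients and slightly more general in scope. Note for context that the positivity hypotheses are not vacuous in the paper: they matter later for the real-stability machinery (Lemma~\ref{lem:determinantstable}), just not for this identity.
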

\begin{proof}
We can assume that both $M$ and $N$ are positive definitive and hence
invertible. The argument for the the positive semi-definite case 
follows by a continuity argument (using Hurwitz's theorem from complex
analysis, see also \cite{MSS15b}). 

We show that the two sides of \eqref{eq:expectedchar} are equivalent to the same expression.  Beginning on the left hand side, 
\begin{align*}
\E[\det(M - \xi \mathbf{v} \mathbf{v}^{\T}) &\det(N + \xi \mathbf{v} \mathbf{v}^{\T})] \\
&= \det(M) \det(N) \E[\det(I - \xi M^{-1/2} \mathbf{v} \mathbf{v}^{\T} M^{-1/2}) \det(I + \xi N^{-1/2} \mathbf{v} \mathbf{v}^{\T} N^{-1/2})] \\
&=  \det(M) \det(N) \E[1 + \xi \mathbf{b}^{\T} \mathbf{b} - \xi \mathbf{a}^{\T}\mathbf{a} - \xi^2 \mathbf{a}^{\T} \mathbf{a}  \mathbf{b}^{\T} \mathbf{b}] \\
&= \det(M) \det(N) (1 - \tau^2 \mathbf{a}^{\T} \mathbf{a} \mathbf{b}^{\T} \mathbf{b})
\end{align*}
where $\mathbf{a} := M^{-1/2} \mathbf{v}$ and $\mathbf{b} := N^{-1/2} \mathbf{v}$. 
For the right hand side of (\ref{eq:expectedchar}), 
\begin{align*}
\left(1 - \frac{1}{2} \frac{d^2}{dt^2}  \right) \Bigg |_{t=0} &\det(M                                                                
+ t \tau \mathbf{v} \mathbf{v}^{\T}) \det(N + t \tau \mathbf{v} \mathbf{v}^{\T})  \\
&= \det(M) \det(N) \left(1 - \frac{1}{2} \frac{d^2}{dt^2} \right)
  \Bigg |_{t=0} \det(I + t \tau 
  \mathbf{a} \mathbf{a}^{\T}) \det(I + t \tau \mathbf{b} \mathbf{b}^{\T}) \\
&= \det(M) \det(N) (1- \tau^2 \mathbf{a}^{\T} \mathbf{a} \mathbf{b}^{\T} \mathbf{b})
\end{align*}\
where the last line follows from Lemma \ref{lem:rankone}.
\end{proof}

Centering our random variables and applying the previous lemma leads to the following corollary for non-centered random variables.
\begin{corollary} \label{cor:univariateidentity}
Let $M, N \in \R^{m \times m}$ be arbitrary PSD matrices, $\mathbf{v} \in \R^m$
and $\xi$ a random variable with expectation $\mu$ and variance
$\tau^2$. 
$$
\E_{\xi} [\det(M - (\xi - \mu) \mathbf{v} \mathbf{v}^{\T} ) \det(N + (\xi-\mu) \mathbf{v} \mathbf{v}^{\T})] = \left( 1 -
  \frac{1}{2} \frac{d^2}{dt^2} \right) \Bigg |_{t=0} \det(M  + t \tau \mathbf{v} \mathbf{v}^{\T}) \det(N + t \tau \mathbf{v} \mathbf{v}^{\T}).
$$
\end{corollary}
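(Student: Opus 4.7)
The plan is simply to reduce the statement to Lemma~\ref{lem:expectedchar} by centering the random variable. Define $\xi' := \xi - \mu$. Then $\xi'$ has mean zero and variance $\E[(\xi-\mu)^2] = \tau^2$, so $\xi'$ satisfies exactly the hypotheses of Lemma~\ref{lem:expectedchar}. Meanwhile, the left-hand side of the claimed identity is, by definition of $\xi'$,
$$
\E_{\xi}\left[\det(M - (\xi-\mu) \mathbf{v}\mathbf{v}^{\T})\det(N + (\xi-\mu) \mathbf{v}\mathbf{v}^{\T})\right] = \E_{\xi'}\left[\det(M - \xi' \mathbf{v}\mathbf{v}^{\T})\det(N + \xi' \mathbf{v}\mathbf{v}^{\T})\right].
$$

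Applying Lemma~\ref{lem:expectedchar} to the random variable $\xi'$ (with the same PSD matrices $M,N$ and the same vector $\mathbf{v}$) then directly yields the right-hand side
$$
\left(1 - \frac{1}{2}\frac{d^2}{dt^2}\right)\Bigg|_{t=0} \det(M + t\tau \mathbf{v}\mathbf{v}^{\T})\det(N + t\tau \mathbf{v}\mathbf{v}^{\T}),
$$
since the variance $\tau^2$ appearing inside the derivative expression depends only on the second moment of the centered variable, which we have arranged to coincide. There is no real obstacle here: the corollary is purely a matter of noting that Lemma~\ref{lem:expectedchar} only ever saw the mean and variance of the random variable through the centered quantity $\xi - \E \xi$ (indeed, the proof of the lemma used $\E[\xi]=0$ only to kill the linear-in-$\xi$ terms, which the substitution $\xi \mapsto \xi - \mu$ performs explicitly on the left-hand side). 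Thus the corollary is a one-line consequence of the lemma, and the only thing to verify is that the linear terms on the LHS indeed vanish upon taking expectation of $\xi-\mu$, which is immediate.
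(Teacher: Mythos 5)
Your proof is correct and is exactly the argument the paper intends: the corollary is obtained by centering, i.e.\ applying Lemma~\ref{lem:expectedchar} to $\xi' = \xi - \mu$, which has mean zero and the same variance $\tau^2$. The paper gives no separate proof beyond the remark that one should ``center the random variables and apply the previous lemma,'' so your write-up matches it.
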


We can now derive an expression for the expected characteristic polynomial.
\begin{proposition} \label{prop:expectedcharpoly}
Let $\mathbf{u}_1, \dots, \mathbf{u}_n \in \R^m$.   Consider independent random variables $\xi_{i}$ with means $\mu_{i}$ and variances $\tau_{i}^2$.  Let $Q \in \R^{m \times m}$ be a symmetric matrix.  
\begin{align*}
 \E_{\xi} \left[ \det\left(x^2 I - \Big(Q + \sum_{i = 1}^n (\xi_i - \mu_i) \mathbf{u}_i \mathbf{u}_i^T
     \Big)^2 \right) \right]  
= \prod_{i=1}^n \left(1 -\frac{\partial_{z_i}^2}{2} \right)  \Bigg|_{z_i = 0} 
& ~~~ \det\Big(x I -Q + \sum_{i=1}^n z_i \tau_i \mathbf{u}_i \mathbf{u}_i^{\T} \Big)  \\
&\times  \det\Big(x I + Q + \sum_{i=1}^n z_i \tau_i \mathbf{u}_i \mathbf{u}_i^{\T} \Big),
\end{align*}
and this is a real rooted polynomial in $x$.
\end{proposition}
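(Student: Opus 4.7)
The plan is to derive the identity by iterated application of Corollary~\ref{cor:univariateidentity}, and then separately establish real-rootedness via real stability of the pre-operator polynomial. Starting from the left-hand side, factor the determinant as $\det(x^2 I - M^2) = \det(xI - M)\det(xI + M)$ where $M = Q + \sum_i (\xi_i - \mu_i) u_i u_i^\T$. Peeling off the $i$-th summand writes the product in the form $\det(M_i' - (\xi_i - \mu_i) u_i u_i^\T) \, \det(N_i' + (\xi_i - \mu_i) u_i u_i^\T)$, where $M_i'$, $N_i'$ are symmetric matrices independent of $\xi_i$.

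I would then take the expectation with respect to $\xi_1$ first (conditioning on the others, using independence) and invoke Corollary~\ref{cor:univariateidentity} to replace $\E_{\xi_1}[\,\cdot\,]$ by the operator $(1 - \tfrac{1}{2}\partial_{z_1}^2)|_{z_1 = 0}$ acting on $\det(M_1' + z_1 \tau_1 u_1 u_1^\T) \, \det(N_1' + z_1 \tau_1 u_1 u_1^\T)$. Since this operator involves only $z_1$, it commutes with the remaining expectations and with operators introduced in later iterations, so repeating the same step for $\xi_2,\ldots,\xi_n$ yields exactly the claimed right-hand side. A subtle point is that Corollary~\ref{cor:univariateidentity} is stated for PSD matrices, whereas the intermediate $M_i'$, $N_i'$ are only symmetric (in general indefinite, since $Q$ is arbitrary and random summands may contribute negatively). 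This is not a real obstruction: both sides of the corollary are polynomials in the entries of $M$ and $N$, so an identity valid on the open set of PD matrices extends by polynomial continuation to all symmetric matrices.

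For real-rootedness, let
$$
P(x, z_1,\ldots,z_n) := \det\!\Bigl(xI - Q + \textstyle\sum_i z_i \tau_i u_i u_i^\T\Bigr) \, \det\!\Bigl(xI + Q + \textstyle\sum_i z_i \tau_i u_i u_i^\T\Bigr).
$$
I would show $P$ is real stable in all variables. For each factor $\det(xI \pm Q + \sum_i z_i \tau_i u_i u_i^\T)$, pick $c \geq \|Q\|$ so that $cI \mp Q \succeq 0$, substitute $x = y + c$ to rewrite the factor as $\det(yI + (cI \mp Q) + \sum_i z_i \tau_i u_i u_i^\T)$, homogenize with an auxiliary variable $w$ multiplying $cI \mp Q$, and apply Lemma~\ref{lem:determinantstable} using the PSD matrices $I$, $cI \mp Q$, and $\tau_i u_i u_i^\T$; then fix $w = 1$ via Proposition~\ref{prop:setreal}. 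The real shift $y = x - c$ preserves real stability, and a product of real stable polynomials is real stable, so $P$ is. Finally, Lemma~\ref{lem:derivativestable} shows that each operator $(1 - \tfrac{1}{2}\partial_{z_i}^2)$ preserves real stability, and Proposition~\ref{prop:setreal} shows that the restriction $z_i = 0$ preserves it too. The resulting univariate polynomial in $x$ is real stable, hence real-rooted.

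The main difficulty is bridging the PSD hypotheses in Corollary~\ref{cor:univariateidentity} and Lemma~\ref{lem:determinantstable} with the fact that $Q$ is an arbitrary symmetric matrix. On the identity side this is resolved by polynomial extension off the PSD cone; on the stability side it is resolved by a real shift that moves $\pm Q$ into the PSD cone. Neither step is technically hard in isolation, but both need to be handled explicitly for the iteration and the real-rootedness argument to run cleanly.
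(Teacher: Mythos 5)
Your proposal is correct, and its core is the same as the paper's: factor $\det(x^2I - M^2) = \det(xI-M)\det(xI+M)$ and eliminate the $\xi_i$ one at a time via Corollary~\ref{cor:univariateidentity}, with the single-variable operators commuting past the remaining expectations. The difference lies in how the PSD hypothesis of the corollary is bridged. The paper restricts to $x > \|Q\| + 2\sum_i \beta_i\|\mathbf{u}_i\mathbf{u}_i^{\T}\|$ (with $\beta_i$ the largest outcome magnitude of $\xi_i$), so that every intermediate matrix is positive definite for all realizations and for $z_j$ near $0$, runs the induction there, and then extends to all $x$ because two univariate polynomials agreeing on an interval coincide. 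You instead extend Corollary~\ref{cor:univariateidentity} itself to arbitrary symmetric $M, N$ by noting both sides are polynomials in the matrix entries and agree on the open PD cone; this is valid and arguably cleaner, since it removes the need to track a "large $x$" regime through the induction (though the paper's route avoids restating the corollary in greater generality). For real-rootedness your treatment is actually more complete than the paper's: the paper simply cites Lemma~\ref{lem:determinantstable}, which as stated only covers $\det(\sum_i z_iA_i)$ with $A_i$ PSD, whereas $Q$ here is an arbitrary symmetric matrix; your shift $x = y + c$ with $c \geq \|Q\|$, homogenization with an auxiliary variable $w$ multiplying $cI \mp Q$, application of Lemma~\ref{lem:determinantstable} to the PSD matrices $I$, $cI \mp Q$, $\tau_i\mathbf{u}_i\mathbf{u}_i^{\T}$, and restriction $w=1$ via Proposition~\ref{prop:setreal} supplies exactly the missing step (equivalently one could invoke the stronger Borcea--Br\"and\'en statement allowing a Hermitian constant term). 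Both approaches buy the same conclusion; yours makes the two continuation/stability issues explicit, the paper's keeps the auxiliary machinery minimal.
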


\begin{proof}
For each $i$, let $\beta_i$ denote the maximum value of $\abs{\xi_i}$
among outcomes in the (finite) support of $\xi_i$.
We begin by restricting the domain of our polynomials to $x>
\|Q\|+2\sum_{i=1}^n \beta_i \|\mathbf{u}_i \mathbf{u}_i^{\T} \|$.  We then proceed by induction. 
Our induction hypothesis will be that for $0 \leq k \leq n$
\begin{align} \label{eq:inductionhyp}
& ~ \E \left[ \det\left(x^2 I - \Big(Q+ \sum_{i=1}^n (\xi_i-\mu_i) \mathbf{u}_i \mathbf{u}_i^{\T} \Big)^2
  \right) \right] \notag \\
= & ~  
\E_{\xi_{k+1}, \dots, \xi_{n}} \prod_{i=1}^k \left(1 - \frac{\partial_{z_i}^2}{2} \right) \Bigg|_{z_i = 0} \det\Bigg(x I -  Q - \sum_{i=k+1}^n (\xi_i-\mu_i) \mathbf{u}_i \mathbf{u}_i^{\T} + \sum_{j=1}^{k} z_j \tau_j \mathbf{u}_j  \mathbf{u}_j^{\T} \Bigg) \nonumber \\
&\qquad \qquad \qquad \qquad \qquad ~~~~~ \times \det\Bigg(x I +  Q+ \sum_{i=k+1}^n (\xi_i - \mu_i) \mathbf{u}_i \mathbf{u}_i^{\T} + \sum_{j=1}^k z_j \tau_j \mathbf{u}_j \mathbf{u}_j^{\T} \Bigg)
\end{align}
The base case, $k=0$ is trivially true as we get the same formula on
both sides after recalling that for any matrix $Y$
$$
\det\left(x^2 I - Y^2 \right) = \det( x I - Y) \det( x I + Y).
$$

By our assumption that $x> \|Q\|+2\sum_{i=1}^n \beta_i \|\mathbf{u}_i \mathbf{u}_i^{\T} \|$, 
\begin{align*}
x I -  \sum_{i=k+2}^n (\xi_i - \mu_i) \mathbf{u}_i \mathbf{u}_i^{\T}  + \sum_{j=1}^{k} z_j \tau_j \mathbf{u}_j  \mathbf{u}_j^{\T} 
\end{align*}
 is PSD for any realization of $\xi_{k+2}, \dots, \xi_{n}$ and in a neighborhood of zero for each $z_j$. 
Applying Corollary \ref{cor:univariateidentity} to the right hand side of (\ref{eq:inductionhyp}), yields
\begin{align*} 
& ~ \E \left[ \det\left(x^2 I - \Big(Q+ \sum_{i=1}^n (\xi_i-\mu_i) \mathbf{u}_i \mathbf{u}_i^{\T} \Big)^2 \right) \right]  \\
= & ~  
\E_{\xi_{k+2}, \dots, \xi_{n}} \prod_{i=1}^{k+1} \left(1 - \frac{\partial_{z_i}^2}{2} \right) \Bigg|_{z_i = 0} \det\Bigg(x I -  Q -  \sum_{i=k+2}^n (\xi_i-\mu_i) \mathbf{u}_i \mathbf{u}_i^{\T} + \sum_{j=1}^{k+1} z_j \tau_j \mathbf{u}_j  \mathbf{u}_j^{\T} \Bigg) \\
& \qquad \qquad \qquad \qquad \qquad ~~~~~ \times \det\Bigg(x I +  Q +  \sum_{i=k+2}^n (\xi_i-\mu_i) \mathbf{u}_i \mathbf{u}_i^{\T} + \sum_{j=1}^{k+1} z_j \tau_j \mathbf{u}_j  \mathbf{u}_j^{\T} \Bigg)
\end{align*}
which completes the induction.  To extend the proof to all $x$, we
remark that we have shown the equivalence of two polynomials in the
interval $x> \|Q\|+2\sum_{i=1}^n \beta_i \|\mathbf{u}_i \mathbf{u}_i^{\T} \|$, and two
polynomials that agree on an interval are identical. 

Real-rootedness of the right hand side follows by
Lemma~\ref{lem:determinantstable}, Lemma~\ref{lem:derivativestable},
and that by Proposition~\ref{prop:setreal} restriction to $z = {\bf 0}$ preserves
real-stability, and a univariate real stable polynomial is real rooted.
\end{proof}

\section{Defining the Interlacing Family}
The next proposition establishes that it suffices to bound the largest root of a single member of our interlacing family.

\begin{proposition} \label{prop:interlacing}
There exists a choice of outcomes  $\eps_1,\ldots,\eps_n$
in the finite support of $\xi_1, \ldots, \xi_n$, s.t.
$$
\left \| \sum_{i=1}^n \eps_i \mathbf{u}_i \mathbf{u}_i^{\T} - \sum_{i=1}^n \mu_i \mathbf{u}_i \mathbf{u}_i^{\T} \right \| 
$$
is less than the largest root of 
$$
\E_{ \xi_1, \cdots, \xi_n } \left[ \det\left(x^2 I - \Big( \sum_{i=1}^n (\xi_i - \mu_i) \mathbf{u}_i \mathbf{u}_i^{\T} \Big)^2 \right) \right].
$$
where $\E[\xi_i] = \mu_i, \forall i \in [n]$.
\end{proposition}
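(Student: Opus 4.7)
The plan is to set up the polynomials as an interlacing family (Definition~\ref{def:interlacingfamily}) so that Lemma~\ref{lem:rootbound} produces the desired outcomes. Letting $S_i$ denote the finite support of $\xi_i$, for each tuple $(s_1,\ldots,s_n)\in S_1\times\cdots\times S_n$ define
$$
f_{s_1,\ldots,s_n}(x) := \prod_{i=1}^n \Pr[\xi_i=s_i]\cdot \det\!\left(x^2I-\Big(\sum_{i=1}^n(s_i-\mu_i)\mathbf{u}_i\mathbf{u}_i^{\T}\Big)^{2}\right).
$$
Each $f_{s_1,\ldots,s_n}$ factors as a positive scalar times $\det(xI-A_s)\det(xI+A_s)$ for the Hermitian matrix $A_s=\sum_i(s_i-\mu_i)\mathbf{u}_i\mathbf{u}_i^{\T}$, so it is real-rooted with positive leading coefficient; moreover, by equation~\eqref{eq:rootisnorm} its largest root equals $\|A_s\|=\|\sum_i\eps_i\mathbf{u}_i\mathbf{u}_i^{\T}-\sum_i\mu_i\mathbf{u}_i\mathbf{u}_i^{\T}\|$ when $(s_i)=(\eps_i)$. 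With the probability weights baked in, the unweighted sum defining $f_{\emptyset}$ in Definition~\ref{def:interlacingfamily} coincides with the expected polynomial that appears in the proposition statement.

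The main step is verifying the common-interlacing condition. By Lemma~\ref{lem:commoninterlacing}, for each partial assignment $s_1,\ldots,s_k$ it suffices to show that every convex combination $\sum_{s\in S_{k+1}}\alpha_s\,f_{s_1,\ldots,s_k,s}$ (with $\alpha_s\ge 0$, $\sum_s\alpha_s=1$) is real-rooted. My plan is to recognize such a combination as the expected polynomial of a reweighted problem: introduce the random variable $\xi'_{k+1}$ on $S_{k+1}$ with $\Pr[\xi'_{k+1}=s]\propto \alpha_s\Pr[\xi_{k+1}=s]$, absorb the constant shift together with the mean mismatch by setting
$$
Q' \;=\; \sum_{i=1}^k (s_i-\mu_i)\mathbf{u}_i\mathbf{u}_i^{\T} \;+\; (\mu'_{k+1}-\mu_{k+1})\,\mathbf{u}_{k+1}\mathbf{u}_{k+1}^{\T},
$$
and apply Proposition~\ref{prop:expectedcharpoly} with this $Q'$ and the independent random variables $\xi'_{k+1},\xi_{k+2},\ldots,\xi_n$. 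The resulting expected polynomial is real-rooted, and up to a positive scalar it equals the convex combination we started with.

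Having established the interlacing family, Lemma~\ref{lem:rootbound} gives outcomes $\eps_1,\ldots,\eps_n$ in $S_1\times\cdots\times S_n$ for which the largest root of $f_{\eps_1,\ldots,\eps_n}$ does not exceed the largest root of $f_{\emptyset}$. The former equals $\|\sum_i\eps_i\mathbf{u}_i\mathbf{u}_i^{\T}-\sum_i\mu_i\mathbf{u}_i\mathbf{u}_i^{\T}\|$ by equation~\eqref{eq:rootisnorm}, and the latter is the largest root of the expected characteristic polynomial in the proposition, completing the proof.

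The main obstacle is the bookkeeping in the convex-combination step: Proposition~\ref{prop:expectedcharpoly} is stated with the centering $(\xi_i-\mu_i)$, so for an arbitrary reweighting $\xi'_{k+1}$ whose mean differs from $\mu_{k+1}$ one must carefully fold the mean-discrepancy term into the symmetric shift $Q'$ before the proposition applies. Once this is done, real-rootedness propagates automatically from the real-stability facts (Lemmas~\ref{lem:determinantstable} and \ref{lem:derivativestable}) invoked inside Proposition~\ref{prop:expectedcharpoly}.
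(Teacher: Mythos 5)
Your proposal is correct and follows essentially the same route as the paper: weight the leaf polynomials by the outcome probabilities, use Lemma~\ref{lem:commoninterlacing} to reduce the common-interlacing condition to real-rootedness of convex combinations, realize each convex combination as the expected polynomial of a reweighted variable $\xi'_{k+1}$ so that Proposition~\ref{prop:expectedcharpoly} applies, and finish with Lemma~\ref{lem:rootbound} together with equation~\eqref{eq:rootisnorm}. In fact you are slightly more explicit than the paper on the one delicate point, namely absorbing the mean discrepancy $(\mu'_{k+1}-\mu_{k+1})\mathbf{u}_{k+1}\mathbf{u}_{k+1}^{\T}$ into the arbitrary symmetric shift $Q$, which the paper leaves implicit.
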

\begin{proof}

For a vector of independent random variables $(\xi_1, \dots, \xi_n)$
with finite support,  
let $p_{i, x}$ be the probability that $\xi_i = x$. For $\mathbf{s} = (\eps_1,
\dots, \eps_n)$ in the support of $\xi_1, \ldots, \xi_n$, we define 
\begin{align*}
q_{\mathbf{s}}(x) := \prod_{i=1}^n p_{i, \eps_i} \det\left(x^2 I - \Big(\sum_{i=1}^n (\eps_i - \mu_i) \mathbf{u}_i \mathbf{u}_i^{\T}\Big)^2 \right) .
\end{align*}  
Let $\mathbf{t}$ be a vector of $k$ outcomes in the support of  $\xi_1, \ldots,
\xi_k$, i.e. a partial assignment of outcomes.
Then we consider the conditional expected polynomial
$$
q_{\mathbf{t}}(x) := \left( \prod_{i=1}^k p_{i, t_i} \right) \E_{\xi_{k+1}, \dots, \xi_{n}} \left[ \det \left( x^2 I - \left( \sum_{i=1}^k (t_i-\mu_i) \mathbf{u}_i \mathbf{u}_i^{\T} + \sum_{j=k+1}^n  (\xi_j - \mu_j) \mathbf{u}_j \mathbf{u}_j^{\T} \right)^2 \right) \right] 
$$
which coincides with (\ref{eq:conditionalpoly}).
We show that $q_s$ is an interlacing family.  
Let $r_{k+1}^{(1)}\ldots r_{k+1}^{(l)}$ be the outcomes in the support of $\xi_{k+1}$.
For a given $\mathbf{t}$ and $r \in \setof{r_{k+1}^{(1)}, \ldots, r_{k+1}^{(l)}}$, 
let $(\mathbf{t},r)$ denote the vector $(t_1, \dots, t_k, r)$.
By Lemma \ref{lem:commoninterlacing}, it suffices to show that for any
choice of non-negative numbers $\alpha_1, \ldots, \alpha_{l}$ such
that $\sum_j \alpha_j = 1$,
the polynomial
$$
\sum_j \alpha_j q_{\mathbf{t}, r_{k+1}^{(j)}} (x)
$$
is real rooted.
 We can consider these $\alpha$'s as a probability distribution and
 define $p_{k+1, r_{k+1}^{(j)} } = \alpha_j$.
 Therefore,
\begin{align*}
q_{\mathbf{t}}(x) &= \left( \prod_{i=1}^k p_{i, t_i} \right) \E_{\xi_{k+1}, \dots, \xi_{n}} \left[ \det \left( x^2 I - \left( \sum_{i=1}^k (t_i-\mu_i) \mathbf{u}_i \mathbf{u}_i^{\T} + \sum_{j=k+1}^n  (\xi_j-\mu_j) \mathbf{u}_j \mathbf{u}_j^{\T} \right)^2 \right) \right]  \\
&= \sum_{j} \left( \prod_{i=1}^k p_{i, t_i} \right) p_{k+1, r_{k+1}^{(j)}} \\
&~~~~~~ \times \E_{\xi_{k+2}, \dots, \xi_{n}} \left [\det\left( x^2 I
  - \Bigg(\sum_{i=1}^k (t_i-\mu_i) \mathbf{u}_i \mathbf{u}_i^{\T} - (r_{k+1}^{(j)} - \mu_{k+1}) \mathbf{u}_{k+1} \mathbf{u}_{k+1}^{\T} - \sum_{j=k+2}^n (\xi_j - \mu_j)\mathbf{u}_j \mathbf{u}_j^{\T} \Bigg)^2\right) \right] \\
&= \sum_j \alpha_j q_{\mathbf{t}, r_{k+1}^{(j)}} (x). 
\end{align*}  

  By Proposition \ref{prop:expectedcharpoly}, $q_{\mathbf{t}}$ is real rooted,
  which completes the proof that $q_{\mathbf{s}}$ is an interlacing family.
  Finally, by Lemma \ref{lem:rootbound}, there exists a choice of $\mathbf{t}$
  so that the largest root of $q_{\mathbf{t}}$ is upperbounded by $q_\emptyset$.

\end{proof}

\section{Largest Root of the Expected Characteristic Polynomial}
We use the barrier method approach to control the largest root \cite{MSS15b}.
\begin{definition}
For a multivariate polynomial $p(z_1, \dots, z_n)$, we say $\mathbf{z} \in \R^n$ is \emph{above} all the roots of $p$ if for all $\mathbf{t} \in \R_{+}^n$, 
$$
p(\mathbf{z}+\mathbf{t}) > 0.
$$
We use $\Ab_p$ to denote the set of points that are above all the roots of $p$.
\end{definition}
We use the same barrier function as in \cite{bss12, MSS15b}.
\begin{definition}
For a real stable polynomial $p$ and $\mathbf{z} \in \Ab_p$, the barrier function of $p$ in direction $i$ at $\mathbf{z}$ is
$$
\Phi_p^i(\mathbf{z}) := \frac{\partial_{z_i} p(\mathbf{z})}{p(\mathbf{z})} .
$$
\end{definition}

We will also make use of the following lemma that controls the
deviation of the roots after applying a second order differential
operator.
The lemma is a slight variation of Lemma 4.8 in \cite{ag14}.
\begin{lemma} \label{lem:barrierupdate} 
Suppose that $p$ is real stable and $\mathbf{z} \in \Ab_p$. 

If 
$
\Phi_p^j(\mathbf{z}) < \sqrt{2},
$
then
$\mathbf{z} \in \Ab_{(1-
  \frac{1}{2} \partial_{z_j}^2) p}$.
If additionally for $\delta > 0$,
$$
\frac{1}{\delta} \Phi_p^j(\mathbf{z}) + \frac{1}{2} \Phi_p^j(\mathbf{z})^2 \leq 1,
$$
then 
 for all $i$,
$$
\Phi_{(1-\frac{1}{2} \partial_{z_j}^2) p}^i (\mathbf{z} + \delta \cdot \mathbf{1}_j) \leq \Phi_p^i(\mathbf{z}).
$$
\end{lemma}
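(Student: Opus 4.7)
The plan is to follow the barrier-method template of Batson--Spielman--Srivastava, adapted by \cite{ag14} for the second-order differential operator $1 - \tfrac{1}{2}\partial_{z_j}^2$, since the lemma is stated as a slight variant of their Lemma~4.8. The argument naturally splits into two parts matching the two conclusions.

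I would begin by setting $q := (1-\tfrac{1}{2}\partial_{z_j}^2)p$ and $\psi_j := \partial_{z_j}^2 p / p$. A direct computation using linearity of the operator together with the relation $\partial_{z_j}^2 \partial_{z_i} p = \partial_{z_i}(p\,\psi_j) = p\,(\Phi_p^i \psi_j + \partial_{z_i}\psi_j)$ gives the identities
$$\frac{q(\mathbf{w})}{p(\mathbf{w})} \;=\; 1 - \tfrac{1}{2}\psi_j(\mathbf{w}), \qquad \Phi_q^i(\mathbf{w}) \;=\; \Phi_p^i(\mathbf{w}) \;-\; \frac{\tfrac{1}{2}\,\partial_{z_i}\psi_j(\mathbf{w})}{1 - \tfrac{1}{2}\psi_j(\mathbf{w})},$$
valid wherever $1 - \tfrac{1}{2}\psi_j(\mathbf{w}) > 0$. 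Differentiating once more yields $\psi_j = \partial_{z_j}\Phi_p^j + (\Phi_p^j)^2$. Since the univariate restriction of $p$ along $z_j$ is real rooted with $z_j$ above all of its roots (by Proposition~\ref{prop:setreal}), the barrier $\Phi_p^j$ is nonincreasing in $z_j$, so $\partial_{z_j}\Phi_p^j \leq 0$ and hence $\psi_j \leq (\Phi_p^j)^2$. Under the hypothesis $\Phi_p^j(\mathbf{z}) < \sqrt{2}$, this gives $\psi_j(\mathbf{z}) < 2$, so $q(\mathbf{z}) > 0$. Extending to $\mathbf{z}+\mathbf{t}$ for arbitrary $\mathbf{t}\geq 0$ uses the standard monotonicity of $\Phi_p^j$ in every coordinate above the roots (a consequence of real stability), establishing $\mathbf{z} \in \Ab_q$.

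For the second conclusion I would substitute $\mathbf{w} = \mathbf{z}+\delta\,\mathbf{1}_j$ into the identity for $\Phi_q^i$. Monotonicity of $\Phi_p^i$ in the $z_j$-direction yields $\Phi_p^i(\mathbf{z}+\delta\,\mathbf{1}_j) \leq \Phi_p^i(\mathbf{z})$, so the task reduces to controlling the correction term $\tfrac{1}{2}\,\partial_{z_i}\psi_j(\mathbf{z}+\delta\,\mathbf{1}_j)\,/\,(1 - \tfrac{1}{2}\psi_j(\mathbf{z}+\delta\,\mathbf{1}_j))$. The hypothesis $\tfrac{1}{\delta}\Phi_p^j(\mathbf{z}) + \tfrac{1}{2}\Phi_p^j(\mathbf{z})^2 \leq 1$ is calibrated precisely so that after a shift by $\delta$ the denominator stays bounded away from zero while the numerator correction is small enough to be absorbed. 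Concretely, one exploits the concavity of $-1/\Phi_p^j$ along the $z_j$-axis (equivalently, the fact that $\Phi_p^j$ decays like an inverse-linear function in $s$ along $\mathbf{z}+s\,\mathbf{1}_j$) to translate the condition on $\delta$ into a quantitative bound on $\psi_j(\mathbf{z}+\delta\,\mathbf{1}_j)$, which is then fed back into the identity.

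The main technical obstacle is handling the mixed second derivative $\partial_{z_i}\psi_j$ for $i \neq j$: univariate real-rootedness no longer suffices, and one must invoke joint properties of barriers for multivariate real stable polynomials -- essentially the convexity/monotonicity estimates underlying the original \cite{ag14} proof -- to bound this correction in terms of $\Phi_p^i(\mathbf{z})$ and $\Phi_p^j(\mathbf{z})$. Once those bounds are in hand, the proof reduces to a careful algebraic verification that the chosen coupling between $\delta$ and $\Phi_p^j(\mathbf{z})$ makes all error terms cancel in the intended direction.
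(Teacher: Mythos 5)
Your setup is the same as the paper's: you write $q=(1-\tfrac12\partial_{z_j}^2)p$, use the identity $\Phi_q^i=\Phi_p^i-\tfrac{\frac12\partial_{z_i}\Psi_p^j}{1-\frac12\Psi_p^j}$ (with $\Psi_p^j=\partial_{z_j}^2p/p$), and your proof of the first conclusion is correct and essentially identical to the paper's: $\Psi_p^j=\partial_{z_j}\Phi_p^j+(\Phi_p^j)^2\le(\Phi_p^j)^2<2$ at every $\mathbf{z}+\mathbf{t}$ with $\mathbf{t}\ge 0$, by monotonicity of the barrier above the roots, hence $q>0$ there and $\mathbf{z}\in\Ab_q$.

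The second conclusion, however, is where the content of the lemma lies, and your proposal has a genuine gap there. First, a bookkeeping problem: since $\partial_{z_i}\Psi_p^j\le 0$ above the roots, the correction term in your identity is \emph{nonnegative}, i.e.\ $\Phi_q^i\ge\Phi_p^i$ pointwise; so you cannot first spend monotonicity to conclude $\Phi_p^i(\mathbf{z}+\delta\mathbf{1}_j)\le\Phi_p^i(\mathbf{z})$ and then separately argue the correction is ``small.'' You must show the correction at $\mathbf{z}+\delta\mathbf{1}_j$ is at most the \emph{gain} $\Phi_p^i(\mathbf{z})-\Phi_p^i(\mathbf{z}+\delta\mathbf{1}_j)$; the paper keeps this difference and lower-bounds it by $\delta\cdot\bigl(-\partial_{z_j}\Phi_p^i(\mathbf{z}+\delta\mathbf{1}_j)\bigr)$ using convexity of $\Phi_p^i$ in the $z_j$-direction. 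Second, and decisively, comparing the correction to this quantity requires the symmetry $\partial_{z_j}\Phi_p^i=\partial_{z_i}\Phi_p^j$ together with the ratio estimate $\partial_{z_i}\Psi_p^j/\partial_{z_i}\Phi_p^j\le 2\Phi_p^j$ (Lemma 4.10 of \cite{ag14}). This is exactly the mixed-derivative bound you name as ``the main technical obstacle'' and then defer to unspecified ``joint properties of barriers''; without it, the hypothesis $\tfrac1\delta\Phi_p^j(\mathbf{z})+\tfrac12\Phi_p^j(\mathbf{z})^2\le 1$ cannot be connected to the conclusion at all. Once that estimate and convexity are in hand, the finish is the paper's short calculation: $\Phi_p^j(\mathbf{z}+\delta\mathbf{1}_j)\le\Phi_p^j(\mathbf{z})$ and $\Psi_p^j(\mathbf{z}+\delta\mathbf{1}_j)\le\Phi_p^j(\mathbf{z})^2$ reduce everything to $\tfrac{1}{\delta}\Phi_p^j(\mathbf{z})\cdot\bigl(1-\tfrac12\Phi_p^j(\mathbf{z})^2\bigr)^{-1}\le 1$, which is the hypothesis rearranged. (The ``concavity of $-1/\Phi_p^j$'' you invoke plays no role in the actual argument and is not an established property you can lean on here.) So the framework is right, but as written the proof is a sketch whose key inequality is missing.
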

We provide a proof in the Appendix~\ref{sec:omitted_proof} for completeness.

We can now bound the largest root of the expected characteristic polynomial for subisotropic vectors.  
\begin{proposition} \label{prop:subisotropic}
Let $\xi_1, \dots, \xi_n$ be independent scalar random variables
with finite support, with $\E [\xi_i] = \mu_i$, and let $\tau_i^2 = \E[ (\xi_i -\mu_i)^2 ]$.

 Let $\mathbf{v}_1, \dots, \mathbf{v}_n \in
\R^n$ such that $\sum_{i=1}^n \tau_i^2  (\mathbf{v}_i \mathbf{v}_i^{\T})^2 \preceq I.$
 Then the largest root of 
$$
p(x) := \E_{\xi_1,\cdots,\xi_n} \left [ \det \left( x^2 I - \Big(
    \sum_{i=1}^n (\xi_i  -\mu_i ) \mathbf{v}_i \mathbf{v}_i^{\T} \Big)^2 \right) \right]
$$
is at most $4$.
\end{proposition}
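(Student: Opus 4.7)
The plan is to apply a multivariate barrier argument in the style of \cite{MSS15b, ag14} to the real stable polynomial
\[
\tilde P(x, \mathbf{z}) \;:=\; \prod_{j=1}^n \Big(1 - \tfrac{1}{2}\partial_{z_j}^2\Big) P(x, \mathbf{z}),
\qquad
P(x, \mathbf{z}) \;:=\; \det\!\Big(xI + \sum_{j=1}^n z_j \tau_j \mathbf{v}_j \mathbf{v}_j^{\T}\Big)^{\!2},
\]
which satisfies $p(x) = \tilde P(x, \mathbf{0})$ by the $Q = 0$ case of Proposition~\ref{prop:expectedcharpoly}. Real stability of $P$ holds by Lemma~\ref{lem:determinantstable}, and each operator $1 - \tfrac{1}{2}\partial_{z_j}^2$ preserves real stability by Lemma~\ref{lem:derivativestable}. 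I will peel off the operators one at a time via Lemma~\ref{lem:barrierupdate}, starting from $\mathbf{z}^{(0)} = (x_0, 0, \ldots, 0)$ with $x_0 = 4$ and shifting by $\delta = 4/7$ along $\mathbf{1}_{j}$ at step $j$.

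\textbf{Initial barriers.} At $\mathbf{z}^{(0)}$, the matrix $x_0 I + \sum_j s_j \tau_j \mathbf{v}_j \mathbf{v}_j^{\T}$ is positive definite for all $s_j \geq 0$, so $\mathbf{z}^{(0)} \in \Ab_P$. Fact~\ref{fact:Jacobi} applied to $P = \det(M)^2$ gives
\[
\Phi_P^{z_j}(\mathbf{z}^{(0)}) \;=\; 2\,\tau_j\,\mathbf{v}_j^{\T}(x_0 I)^{-1} \mathbf{v}_j \;=\; \frac{2 \tau_j \|\mathbf{v}_j\|^2}{x_0}.
\]
The hypothesis $\sum_j \tau_j^2 (\mathbf{v}_j \mathbf{v}_j^{\T})^2 \preceq I$, rewritten as $\sum_j \tau_j^2 \|\mathbf{v}_j\|^2 \mathbf{v}_j \mathbf{v}_j^{\T} \preceq I$ using $(\mathbf{v}_j \mathbf{v}_j^{\T})^2 = \|\mathbf{v}_j\|^2 \mathbf{v}_j \mathbf{v}_j^{\T}$, and evaluated in the quadratic form at the unit vector $\mathbf{v}_j/\|\mathbf{v}_j\|$, yields the per-index bound $\tau_j \|\mathbf{v}_j\|^2 \leq 1$. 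Hence $\Phi_P^{z_j}(\mathbf{z}^{(0)}) \leq 2/x_0 = 1/2$ for every $j$.

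\textbf{Iterating the barrier step.} At each step $j = 1, \ldots, n$ the current $z_j$-barrier is at most $1/2 < \sqrt{2}$, meeting the first condition of Lemma~\ref{lem:barrierupdate}. With $\Phi = 1/2$ and $\delta = 4/7$, the second condition becomes
\[
\tfrac{1}{\delta}\Phi + \tfrac{1}{2}\Phi^2 \;=\; \tfrac{7}{4}\cdot\tfrac{1}{2} + \tfrac{1}{2}\cdot\tfrac{1}{4} \;=\; \tfrac{7}{8} + \tfrac{1}{8} \;=\; 1,
\]
so the barrier-monotonicity clause of Lemma~\ref{lem:barrierupdate} applies: every barrier $\Phi^{z_k}$ at the new point is at most its value at the previous point. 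Inductively the bound $\Phi^{z_k} \leq 1/2$ is maintained throughout all $n$ steps, and the endpoint $(4,\,4/7,\,\ldots,\,4/7)$ lies in $\Ab_{\tilde P}$.

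\textbf{Main obstacle.} The delicate part is converting this multivariate certificate at $(x_0, \delta\mathbf{1})$ into the claimed univariate bound $\lambda_{\max}(p) \leq x_0 = 4$ on $p(x) = \tilde P(x,\mathbf{0})$, since in general $(x_0,\delta\mathbf{1}) \in \Ab_{\tilde P}$ does not imply $(x_0, \mathbf{0}) \in \Ab_{\tilde P}$. Here I would appeal to the standard MSS-style reasoning that, for $\tilde P$ obtained from a determinantal square by the operators at hand, the $x$-coordinate of the endpoint of the barrier path is an upper bound for the largest root of the univariate restriction $\tilde P(\cdot, \mathbf{0})$ (as in Theorem~5.1 of \cite{MSS15b}). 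This passage is where the careful bookkeeping would concentrate, and is the step most likely to dictate the precise shape of the argument.
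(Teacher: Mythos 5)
Your barrier bookkeeping (initial barrier $\le 1/2$ via $\tau_j\|\mathbf{v}_j\|^2\le 1$, the check $\tfrac{1}{\delta}\Phi+\tfrac12\Phi^2\le 1$, and the iteration through Lemma~\ref{lem:barrierupdate}) is fine, but the step you flag as the ``main obstacle'' is a genuine gap, and the appeal to Theorem~5.1 of \cite{MSS15b} does not close it. In \cite{MSS15b} the univariate polynomial of interest is the \emph{diagonal} restriction $z_i = x$ of the multivariate polynomial, so pushing the barrier point upward in the $z$-directions directly translates into an upper bound on the largest root. Here the situation is different: $x$ is a separate coordinate and $p(x)=\tilde P(x,\mathbf{0})$ is the restriction to $\mathbf{z}=\mathbf{0}$. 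Knowing $(4,\tfrac47\mathbf{1})\in\Ab_{\tilde P}$ gives no information about $\tilde P(\cdot,\mathbf{0})$, because $\Ab_{\tilde P}$ is an up-set: membership is inherited by larger points, never by smaller ones, so you cannot descend from $\mathbf{z}=\tfrac47\mathbf{1}$ to $\mathbf{z}=\mathbf{0}$. Your plan therefore ends with a certificate about the wrong univariate slice.

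The paper resolves exactly this issue by running your argument in reverse: it starts the barrier path at the point $(\alpha,-\boldsymbol{\delta})$ with $\delta_i = t\,\tau_i\mathbf{v}_i^{\T}\mathbf{v}_i$, $\alpha=2t$, $t=2$, i.e.\ at \emph{negative} $z$-coordinates, and then shifts by $+\delta_i\mathbf{1}_i$ at step $i$, so that after all $n$ steps the path lands precisely at $(4,\mathbf{0})$, which is the slice defining $p$. The price is that one must verify above-rootness at the shifted starting point, and this is where the hypothesis $\sum_i\tau_i^2(\mathbf{v}_i\mathbf{v}_i^{\T})^2\preceq I$ is used globally (not just per index): $Q(\alpha,-\boldsymbol{\delta})=\det\big(\alpha I - t\sum_i\tau_i^2(\mathbf{v}_i\mathbf{v}_i^{\T})^2\big)^2\ge\det((\alpha-t)I)^2>0$, and the same positive-definiteness gives $\Phi_Q^i(\alpha,-\boldsymbol{\delta})\le \tfrac{2\tau_i\mathbf{v}_i^{\T}\mathbf{v}_i}{\alpha-t}\le 1$, after which the barrier condition $\tfrac{1}{\delta_i}\Phi+\tfrac12\Phi^2\le\tfrac{4}{t^2}=1$ lets Lemma~\ref{lem:barrierupdate} carry the point to $(4,\mathbf{0})$. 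So the missing idea in your write-up is not extra bookkeeping on top of your path; it is choosing the starting point $-\boldsymbol{\delta}$ (with coordinate-dependent increments $\delta_i$) so that the barrier walk terminates at $\mathbf{z}=\mathbf{0}$, making the final certificate apply to $p$ itself.
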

\begin{proof}
Note that we must have 
\begin{equation}
  \label{eq:onetermnorm}
  \max_{i \in [n] } \tau_i  \mathbf{v}_i^{\T} \mathbf{v}_i \leq  1,
\end{equation}
since otherwise $\sum_{i=1}^n \tau_i^2  (\mathbf{v}_i \mathbf{v}_i^{\T})^2 \preceq I$ is false.



Let 
$$
Q(x,\mathbf{z}) = \left( \det\Big(x I + \sum_{i=1}^n z_i \tau_i \mathbf{v}_i \mathbf{v}_i^{\T}\Big) \right)^2.
$$

Let $0 < t < \alpha$ be constants to be determined later.  Define $\delta_i = t \tau_i \mathbf{v}_i^{\T} \mathbf{v}_i$.  We evaluate our polynomial to find that 
\begin{align*}
Q(\alpha, -\delta_1, \dots, -\delta_n) &= \left( \det\Big(\alpha I - \sum_{i=1}^n \delta_i \tau_i  \mathbf{v}_i \mathbf{v}_i^{\T}\Big) \right)^2 \\
&= \left( \det\Big(\alpha I - t \sum_{i=1}^n \tau_i^2 (\mathbf{v}_i^{\T} \mathbf{v}_i)  \mathbf{v}_i \mathbf{v}_i^{\T}\Big) \right)^2 \\
&= \left( \det\Big(\alpha I - t \sum_{i=1}^n \tau_i^2 (\mathbf{v}_i \mathbf{v}_i^{\T})  \mathbf{v}_i \mathbf{v}_i^{\T}\Big) \right)^2 \\
&\geq \left( \det\Big((\alpha - t) I \Big)\right)^2\\
& > 0
  .
\end{align*}
where the second to last step follows by $ \sum_{i=1}^n \tau_i^2 (\mathbf{v}_i \mathbf{v}_i^{\T})  \mathbf{v}_i \mathbf{v}_i^{\T} \preceq I $.

This implies that $(\alpha, -\boldsymbol{\delta}) \in \R^{n+1}$ is above the roots of $Q(x,\mathbf{z})$.
We can upper bound $\Phi^i_Q(\alpha, -\boldsymbol{\delta})$ via Fact \ref{fact:Jacobi} as follows
\begin{align*}
\Phi_Q^i (\alpha, -\boldsymbol{\delta}) &= \frac{\partial_{z_i} Q}{Q} \Bigg|_{x = \alpha, \mathbf{z} = -\mathbf{\delta}} \\
&= \frac{2 \det\Big(x I + \sum_{i=1}^n z_i \tau_i \mathbf{v}_i \mathbf{v}_i^{\T}\Big) \partial_{z_i} \det\Big(x I + \sum_{i=1}^n z_i \tau_i \mathbf{v}_i \mathbf{v}_i^{\T}\Big)}{ \left( \det\Big(x I + \sum_{i=1}^n z_i \tau_i \mathbf{v}_i \mathbf{v}_i^{\T}\Big) \right)^2} \Bigg|_{x = \alpha, \mathbf{z} = -\boldsymbol{\delta}}\\
&= 2\Tr\left[ \Big(x I + \sum_{i=1}^n z_i \tau_i \mathbf{v}_i \mathbf{v}_i^{\T}\Big)^{-1} \tau_i \mathbf{v}_i \mathbf{v}_i^{\T} \right] \Bigg|_{x = \alpha, \mathbf{z} = -\boldsymbol{\delta}}\\
&= 2\Tr\left[ \Big(\alpha I - t \sum_{i=1}^n \tau_i^2 (\mathbf{v}_i \mathbf{v}_i^{\T})^2\Big)^{-1} \tau_i \mathbf{v}_i \mathbf{v}_i^{\T} \right] \\
&\leq 2\Tr\left[ \Big(\alpha - t \Big)^{-1} \tau_i \mathbf{v}_i \mathbf{v}_i^{\T} \right] \\
&= \frac{2 \tau_i \mathbf{v}_i^{\T} \mathbf{v}_i}{\alpha - t}
,
\end{align*}
where the fifth step follows by 
$ \sum_{i=1}^n \tau_i^2 (\mathbf{v}_i \mathbf{v}_i^{\T})  \mathbf{v}_i \mathbf{v}_i^{\T} \preceq I $.

Choosing $\alpha = 2t$ and $t = 2$, and recalling
Condition~\eqref{eq:onetermnorm} we get
\begin{align*}
\Phi_Q^i (\alpha, -\boldsymbol{\delta}) \leq \frac{2 \tau_i \mathbf{v}_i^{\T} \mathbf{v}_i}{\alpha -
  t}
\leq 1 < \sqrt{2}.
\end{align*}
By Lemma~\ref{lem:derivativestable}, 
$(1- \frac{1}{2} \partial_{z_i}^2) Q$ is real stable.
By Lemma \ref{lem:barrierupdate}, and since
$(4, -\boldsymbol{\delta} )\in \Ab_{(1-
  \frac{1}{2} \partial_{z_i}^2) Q}$, we have 
$(4, -\boldsymbol{\delta} + \delta_i \mathbf{1}_i)\in \Ab_{(1-
  \frac{1}{2} \partial_{z_i}^2) Q}$.
Also
\begin{align*}
\frac{1}{\delta_i} \Phi_Q^i(\alpha, -\boldsymbol{\delta}) + \frac{1}{2}
  \Phi_Q^i(\alpha, -\boldsymbol{\delta})^2
  &\leq \frac{1}{t \tau_i \mathbf{v}_i^{\T} \mathbf{v}_i} \frac{2 \tau_i \mathbf{v}_i^{\T}
    \mathbf{v}_i}{t} + \frac{1}{2} \left(\frac{2 \tau_i \mathbf{v}_i^{\T} \mathbf{v}_i}{t} \right)^2 \\
&\leq \frac{4}{t^2} \\
&= 1. 
\end{align*}
Therefore, by Lemma \ref{lem:barrierupdate}, for all $j$
$$
\Phi_{(1- \frac{1}{2} \partial_{z_i}^2) Q}^j (4, \mathbf{z} + \delta_i \mathbf{1}_i) \leq \Phi_Q^j(4,\mathbf{z}).
$$
Repeating this argument for each $i \in [n]$, demonstrates that $(4, -\boldsymbol{\delta} + \sum_{i=1}^n \delta_i \mathbf{1}_i) = (4, 0, \dots, 0)$ lies above the roots of 
$$
\prod_{i=1}^n \left(1 - \frac{ \partial_{z_i}^2}{2} \right) \left(
  \det\Big(x I  + \sum_{i=1}^n z_i \tau_i \mathbf{u}_i \mathbf{u}_i^{\T} \Big) \right)
\left( \det\Big(x I + \sum_{i=1}^n z_i \tau_i \mathbf{u}_i \mathbf{u}_i^{\T} \Big)
\right)
.
$$
After restricting to $z_i = 0$ for all $i$, we then conclude by Proposition \ref{prop:expectedcharpoly} that this is equivalent to a bound on the largest root of the expected characteristic polynomial.
\end{proof}

Having developed the necessary machinery, we now prove our main theorem.
\begin{proof} [Proof of Theorem \ref{thm:mainexpct}]
Define $\mathbf{v}_i = \frac{\mathbf{u}_i}{\sqrt{\sigma}}$.  
Then, 
$\| \sum_{i=1}^n \tau_i^2 (\mathbf{v}_i \mathbf{v}_i^{\T})^2 \| = 1$.  
Applying Proposition \ref{prop:subisotropic} and Proposition~\ref{prop:interlacing}, we conclude that there exists a choice of outcomes $\eps_i$ such that
$$
\left\|  \sum_{i=1}^n (\eps_i  -\mu_i ) \mathbf{v}_i \mathbf{v}_i^{\T} \right \| \leq 4.  
$$
From this, we conclude that
$$
\left\| \sum_{i=1}^n  (\eps_i  -\mu_i ) \mathbf{u}_i \mathbf{u}_i^{\T} \right \| \leq 4 \sigma.  
$$
\end{proof}

\newpage
\addcontentsline{toc}{section}{References}
\bibliographystyle{plain}
\bibliography{ref_2,ref}
\newpage
\appendix

\section{Omitted Proofs}\label{sec:omitted_proof}

\subsection{Proof of Lemma~\ref{lem:barrierupdate}}

Choosing $c=1/2$ in Lemma~\ref{lem:barrierupdate_general} gives a
proof of Lemma~\ref{lem:barrierupdate}.
\begin{lemma}[Generalization of Lemma 4.8 in \cite{ag14}]\label{lem:barrierupdate_general}
Suppose that $p(z_1, \cdots, z_m)$ is real stable and $z \in
Ab_p$. For any $c \in [0,1]$. 

If 
$
\Phi_p^j(\mathbf{z}) < \sqrt{1/c},
$
then
$\mathbf{z} \in \Ab_{(1-
  c\partial_{z_j}^2) p}$.
If additionally for $\delta > 0$,
\begin{align*}
c \cdot \left( \frac{2}{\delta} \Phi_p^j (\mathbf{z}) +  ( \Phi_p^j (\mathbf{z}) )^2 \right) \leq 1,
\end{align*}
then, for all $i \in [m]$,
\begin{align*}
\Phi_{(1- c\partial_j^2) p }^i ( \mathbf{z} + \delta
{\bf 1}_j ) \leq \Phi_p^i (
\mathbf{z}).
\end{align*}
\end{lemma}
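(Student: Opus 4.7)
The plan is to mirror \cite[Lemma 4.8]{ag14}, but carry the parameter $c$ symbolically so that Lemma~\ref{lem:barrierupdate} drops out upon setting $c = 1/2$ at the end. The key reduction is to the univariate polynomial $q(s) := p(\mathbf{z} + \mathbf{t} + s\mathbf{1}_j)$ for an arbitrary $\mathbf{t} \in \R_+^m$: by real stability of $p$ and Proposition~\ref{prop:setreal}, $q$ is real rooted, and since $\mathbf{z}+\mathbf{t} \in \Ab_p$, all of its roots $\lambda_k$ lie in $(-\infty,0]$. I will freely use the partial-fraction identities $\phi(s) := q'(s)/q(s) = \sum_k 1/(s - \lambda_k)$, $\phi'(s) = -\sum_k 1/(s-\lambda_k)^2 \le 0$, and $q''/q = \phi^2 + \phi'$, together with the standard coordinatewise monotonicity and convexity of the barrier functions $\Phi_p^i$ implied by real stability.

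For the first conclusion, $[(1 - c\partial_{z_j}^2)p](\mathbf{z}+\mathbf{t}) = q(0) - cq''(0) = q(0)\bigl[1 - c\phi(0)^2 - c\phi'(0)\bigr]$. Since $q(0) > 0$ and $-c\phi'(0) \ge 0$, this bracket is bounded below by $1 - c\Phi^j_p(\mathbf{z})^2 > 0$ using monotonicity $\phi(0) \le \Phi^j_p(\mathbf{z})$ and the hypothesis $\Phi^j_p(\mathbf{z}) < \sqrt{1/c}$. Because $\mathbf{t}$ was arbitrary in $\R_+^m$, this yields $\mathbf{z} \in \Ab_{(1-c\partial_{z_j}^2)p}$.

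The second conclusion is the main obstacle. Writing $\tilde p := (1-c\partial_{z_j}^2)p$, the diagonal case $i = j$ proceeds via the identity $q'''/q = \phi^3 + 3\phi\phi' + \phi''$, which yields
$$
\Phi^j_{\tilde p}(\mathbf{z}+\delta\mathbf{1}_j) \;=\; \frac{\phi(\delta) - c\bigl(\phi(\delta)^3 + 3\phi(\delta)\phi'(\delta) + \phi''(\delta)\bigr)}{1 - c\phi(\delta)^2 - c\phi'(\delta)}.
$$
After clearing the (positive) denominator, the inequality $\Phi^j_{\tilde p}(\mathbf{z}+\delta\mathbf{1}_j) \le \phi(0) = \Phi^j_p(\mathbf{z})$ reduces to $c|\phi'(\delta)|\bigl(3\phi(\delta) - \phi(0)\bigr) \le (\phi(0) - \phi(\delta))(1 - c\phi(\delta)^2) + c\phi''(\delta)$. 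Only the case $3\phi(\delta) \ge \phi(0)$ is nontrivial; there, using convexity of $\phi$ on $s \ge 0$ (which gives $\phi(0) - \phi(\delta) \ge \delta|\phi'(\delta)|$) together with $3\phi(\delta) - \phi(0) \le 2\phi(0) \le 2\Phi_p^j(\mathbf{z})$, the estimate collapses to $2c\Phi_p^j(\mathbf{z}) \le \delta(1 - c\Phi_p^j(\mathbf{z})^2)$, which is the hypothesis.

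For the off-diagonal case $i \ne j$, the same strategy applies with $\phi$ replaced by $h(s) := \Phi_p^i(\mathbf{z}+s\mathbf{1}_j) = \partial_{z_i} \log p(\mathbf{z}+s\mathbf{1}_j)$: writing $r(s) := \partial_{z_i}p(\mathbf{z}+s\mathbf{1}_j) = h(s) q(s)$ and expanding $r - c r''$ via Leibniz, a parallel manipulation reduces $\Phi^i_{\tilde p}(\mathbf{z}+\delta\mathbf{1}_j) \le h(0)$ to a one-dimensional inequality of the same shape, again closed by the stated hypothesis. The principal technical point I anticipate here is the verification that $h$ satisfies the analogues $h'(s) \le 0$ and $h''(s) \ge 0$ that were used for $\phi$ in the diagonal case; these are standard but nontrivial consequences of real stability of $p$ (via the mixed-partial identity $\partial_{z_i}\Phi^j_p = \partial_{z_j}\Phi^i_p$ and the coordinatewise monotonicity/convexity of $\Phi^j_p$), and they furnish the slack $h(0) - h(\delta) \ge \delta|h'(\delta)|$ needed to close the estimate. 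With both cases handled for general $c \in (0,1]$, specializing to $c = 1/2$ gives $\sqrt{1/c} = \sqrt{2}$ and $c(2\Phi/\delta + \Phi^2) = \Phi/\delta + \Phi^2/2$, matching the statement of Lemma~\ref{lem:barrierupdate} verbatim.
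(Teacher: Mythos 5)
Your argument is correct, and it follows the same barrier-update template as the paper's proof (both are the \cite{ag14} Lemma~4.8 argument with the constant $c$ carried along), but the execution is genuinely different in how the key inequalities are obtained. The paper works multivariately: it writes $\Phi^i_{(1-c\partial_j^2)p} = \Phi^i_p - \frac{c\,\partial_i \Psi^j_p}{1-c\Psi^j_p}$, reduces via convexity and monotonicity of $\Phi_p^i$ and the symmetry $\partial_j\Phi_p^i=\partial_i\Phi_p^j$, and then cites \cite{ag14} Lemmas 4.5, 4.6 and 4.10 (in particular $\partial_i\Psi_p^j/\partial_i\Phi_p^j \le 2\Phi_p^j$) to close. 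You instead restrict to the line $\mathbf{z}+s\mathbf{1}_j$ and re-derive the needed bounds by hand: partial fractions give $\Psi\le\Phi^2$ and positivity of the denominator (your first part, which matches the paper's), and in the off-diagonal case your Leibniz expansion $r''=h''q+2h'q'+hq''$ with $r=\partial_i p$, $h=\Phi_p^i$ is exactly an in-line proof of the $2\Phi_p^j$ bound of \cite{ag14} Lemma 4.10, granted the convexity fact $h''\ge 0$ that you correctly flag; with $h'\le 0$, $h''\ge 0$ (the standard monotonicity/convexity of barrier functions, which the paper also imports without proof) your reduction does collapse to $\frac{2c\phi(\delta)}{1-c\Psi(\delta)}\le\delta$ and closes under the stated hypothesis, and I checked your diagonal-case computation (the cubic identity, the $(3\phi(\delta)-\phi(0))$ bound, and the convexity slack $\phi(0)-\phi(\delta)\ge\delta\,\lvert\phi'(\delta)\rvert$) is sound — in fact the $i=j$ case is just the $h=\phi$ specialization of your off-diagonal computation, so the separate treatment is unnecessary though harmless. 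What your route buys is a more self-contained argument that avoids citing \cite{ag14} Lemmas 4.5 and 4.10; what the paper's route buys is brevity and a uniform treatment of all $i$. The only caveat is that your off-diagonal case is a sketch rather than a written-out verification, but the plan is accurate and the flagged technical points are exactly the standard facts needed.
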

\begin{proof}
We write $\partial_i$ instead of $\partial_{z_i}$ for ease of
notation. 
By Lemma~\ref{lem:derivativestable}, 
$(1- c\partial_j^2) p$ is real stable.
Recall the definitions of $\Phi_p^j(\mathbf{z})$ and $\Psi_p^j(\mathbf{z})$ 
\begin{align*}
\Phi_p^j(\mathbf{z}) = \frac{ \partial_{z_j} \det(M) }{ \det(M) } =
  \frac{\partial_{j} p}{p} \text{ and } \Psi_p^j(\mathbf{z}) = \frac{ \partial_{j}^2 \det(M) }{ \det(M) } = \frac{\partial_j^2 p}{ p }.
\end{align*}
Consider a non-negative vector $\mathbf{t}$. By Lemmas 4.6 and 4.5 in \cite{ag14}, we have
\begin{align}
\label{eq:basicbounds}
\Phi_p^j (\mathbf{z} + \mathbf{t}) \leq \Phi_p^j(\mathbf{z}), ~~~~~~ \Psi_p^j(\mathbf{z} + \mathbf{t} ) \leq \Phi_p^j(\mathbf{z}+\mathbf{t} )^2 \leq \Phi_p^j(\mathbf{z})^2.
\end{align}
Thus, $\Psi_p^j(\mathbf{z} + \mathbf{t} ) \leq \Phi_p^j(\mathbf{z})^2 < 1/c$
so $c \cdot \partial_j^2  p(\mathbf{z} + \mathbf{t}) <  p(\mathbf{z} + \mathbf{t})$, i.e.
$(1-c\partial_j^2)  p(\mathbf{z} + \mathbf{t}) >0$.
Thus $\mathbf{z} \in \Ab_{(1-c\partial_{z_j}^2) p}$.

Next, we write $\Phi_{p - c \cdot \partial_j^2 p}^i$ in terms of $\Phi_p^i$ and $\Psi_p^j$ and $\partial_i \Psi_p^j$.
\begin{align*}
\Phi_{ p - c \cdot \partial_j^2 p }^i = & ~ \frac{ \partial_i ( p - c \cdot \partial_j^2 p ) }{ p - c \cdot \partial_j^2 p } \\ 
= & ~ \frac{ \partial_i ( ( 1 - c \cdot \Psi_p^j ) p ) }{ (1 - c \cdot \Psi_p^j ) p } \\
= & ~ \frac{ ( 1 - c \cdot \Psi_p^j ) ( \partial_i p ) }{ ( 1 - c \cdot \Psi_p^j ) p } + \frac{ ( \partial_i ( 1 - c \cdot \Psi_p^j ) ) p }{ ( 1 - c \cdot \Psi_p^j ) p } \\
= & ~ \Phi_p^i - \frac{ c \cdot \partial_i \Psi_p^j }{ 1 - c \cdot \Psi_p^j }.
\end{align*}
We would like to show that $\Phi_{ p - c \cdot \partial_j^2 p }^i ( \mathbf{z} + \delta {\bf 1}_j ) \leq \Phi_p^i (\mathbf{z})$. Equivalently, it is enough to show that
\begin{align*}
- \frac{ c \cdot \partial_i \Psi_p^j (\mathbf{z} + \delta {\bf 1}_j ) }{ 1 - c \cdot \Psi_p^j ( \mathbf{z} + \delta {\bf 1}_j ) } \leq \Phi_p^i (\mathbf{z}) - \Phi_p^i (\mathbf{z} + \delta {\bf 1}_j ).
\end{align*}
By convexity, it is enough to show that
\begin{align*}
- \frac{ c \cdot \partial_i \Psi_p^j ( \mathbf{z} + \delta {\bf 1}_j ) }{ 1 - c \cdot \Psi_p^j ( \mathbf{z} + \delta {\bf 1}_j ) } \leq \delta \cdot ( - \partial_j \Phi_p^i ( \mathbf{z} + \delta {\bf 1}_j ) ).
\end{align*}
By monotonicity, $\delta \cdot ( - \partial_j \Phi_p^i (\mathbf{z} + \delta
{\bf 1}_j) ) > 0$ so we may divide both sides of the above inequality
by this term and obtain that the above is equivalent to 
\begin{align*}
\frac{ - c \cdot \partial_i \Psi_p^j ( \mathbf{z} + \delta {\bf 1}_j ) }{ - \delta \cdot \partial_i \Phi_p^j (\mathbf{z} + \delta {\bf 1}_j ) } \cdot \frac{1}{ 1 - c \Psi_p^j (\mathbf{z} + \delta {\bf 1}_j ) } \leq 1,
\end{align*}
where we also used $\partial_j \Phi_p^i = \partial_i \Phi_p^j$. By Lemma 4.10 in \cite{ag14}, $ \frac{ \partial_i \Psi_p^j }{ \partial_i \Phi_p^j }  \leq 2 \Phi_p^j $. So, we can write,
\begin{align*}
\frac{2 c}{ \delta } \Phi_p^j (\mathbf{z} + \delta {\bf 1}_j ) \cdot \frac{1}{ 1 - c \cdot \Psi_p^j ( \mathbf{z} + \delta {\bf 1}_j ) } \leq 1.
\end{align*}
By Equation~\eqref{eq:basicbounds}, with $\mathbf{t} = \delta {\bf 1}_j$,
\begin{align*}
\Phi_p^j (\mathbf{z} + \delta {\bf 1}_j) \leq \Phi_p^j(\mathbf{z}), ~~~~~~ \Psi_p^j(\mathbf{z} + \delta {\bf 1}_j ) \leq \Phi_p^j(\mathbf{z}+ \delta {\bf 1}_j )^2 \leq \Phi_p^j(\mathbf{z})^2.
\end{align*}
So, it is enough to show that
\begin{align*}
\frac{ 2 c }{ \delta } \Phi_p^j(\mathbf{z}) \cdot \frac{1}{ 1 - c \cdot \Phi_p^j(\mathbf{z})^2 } \leq 1.
\end{align*}
Using $\Phi_p^j(\mathbf{z}) < 1$ and $ c \in [0,1]$ we know that $1 - c \cdot
\Phi_p^j(\mathbf{z})^2 > 0$. We can multiply both sides with $1 - c \cdot
\Phi_p^j(\mathbf{z})^2$ and we obtain that it suffices to have
\begin{align*}
c \cdot \left( \frac{2}{\delta} \Phi_p^j(\mathbf{z}) + \Phi_p^j(\mathbf{z})^2 \right) \leq 1,
\end{align*}
which is true by assumption.
\end{proof}



\end{document}